\theoremstyle{plain}
\newtheorem{theorem}{Theorem}
\newtheorem{corollary}{Corollary}
\newtheorem{lemma}{Lemma}
\newtheorem{proposition}{Proposition}
\theoremstyle{definition}
\theoremstyle{remark}
\numberwithin{equation}{section}
\newcommand{\psum}{\sideset{}{^*}\sum}
\newcommand{\dsum}{\sideset{}{^d}\sum}
\author[]{}
\date{\today}
\address{}
\email{}
\title{Averaging quadratically twisted modular $L$-values and their derivatives}
\author{Tinghao Huang}
\date{June 2025}
\begin{document}

\let\thefootnote\relax
\footnotetext{MSC2020: Primary 11F66, Secondary 11F41.} %%%%%%%%%%

\begin{abstract}
In this paper, we unconditionally establish an asymptotic formula for the product of the quadratically twisted central $L$-value associated to a holomorphic cusp form $f$, and the quadratically twisted central $L$-derivative to a distinct holomorphic cusp form $g$. This result may be viewed as an extension of \cite{Li-MR4768632}, \cite{Kumar.etc-MR4765788} and \cite{zhou2025momentderivativesquadratictwists}.
\end{abstract} %%%%%%%%%
\maketitle

\maketitle

\section{Introduction}
Let $f(z)$ and $g(z)$ be two cuspidal Hecke newforms of even weights $\kappa_1, \kappa_2$ and odd levels $q_1, q_2$ respectively.
The Fourier expansions of $f$ and $g$ at the cusp $\infty$ shall take the form
\[
    f(z) = \sum_{m=1}^{\infty}\lambda_f(m)m^{\frac{\kappa_1 - 1}{2}}e^{2\pi imz},\,  g(z) = \sum_{n=1}^{\infty}\lambda_g(n)n^{\frac{\kappa_2 - 1}{2}}e^{2\pi inz}.
\] We normalize $f$ and $g$ so that $\lambda_f(1) = \lambda_g(1) = 1$.
For $\chi_D(\cdot) = \big(\frac{D}{\cdot}\big)$ a primitive real character where $D$ is a fundamental discriminant (that is, discriminant for a quadratic extension of $\mathbb{Q}$), we may construct the twisted $L$-functions associated to $f$ and $g$ as the following absolutely convergent Dirichlet series whenever the real part of $s$ is greater than 1:
\[
    L(s,f\otimes \chi_D) := \sum^{\infty}_{m=1}\lambda_f(m)\chi_D(m)m^{-s}
\]
and
\[
    L(s,g\otimes \chi_D) := \sum^{\infty}_{n=1}\lambda_g(n)\chi_D(n)n^{-s}.
\]
These Dirichlet series could be modified to the completed $L$-functions $\Lambda(f,f \otimes \chi_D)$ and $\Lambda(s,g \otimes \chi_D)$ by setting 
\[
    \Lambda(s,f\otimes \chi_D) = 
    \bigg(\frac{|D|\sqrt{q_1}}{2\pi}\bigg)^s\Gamma\bigg(s + \frac{\kappa_1-1}{2}\bigg)\cdot L(s,f\otimes \chi_D)
\]
and 
\[
    \Lambda(s,g\otimes \chi_D) = 
    \bigg(\frac{|D|\sqrt{q_2}}{2\pi}\bigg)^s\Gamma\bigg(s + \frac{\kappa_2-1}{2}\bigg)\cdot L(s,g\otimes \chi_D),
\]
then extended to be entire functions defined over $\mathbb{C}$, and shown to satisfy functional equations
\[
    \Lambda(s,f\otimes \chi_D) 
    = i^{\kappa_1}\eta_f\cdot \chi_D(-q_1)\Lambda(1-s, f\otimes \chi_D)
\]
and 
\[
    \Lambda(s,g\otimes \chi_D) 
    = i^{\kappa_2}\eta_g\cdot \chi_D(-q_2)\Lambda(1-s, g\otimes \chi_D).
\] Here $\eta_f$ and $\eta_g$ are the eigenvalues of the Fricke involutions of $f$ and $g$, which are independent to $D$ and always take values $\pm 1$.
For notational convenience, we denote $i^{\kappa_1}\eta_f\cdot \chi_D(-q_1)$ and $i^{\kappa_2}\eta_g\cdot \chi_D(-q_2)$ respectively as $\omega(f\otimes \chi_D)$ and $\omega(g\otimes \chi_D)$. Notice that with weights $\kappa_1, \kappa_2$ even, $\omega(f\otimes \chi_D)$ and $\omega(g\otimes \chi_D)$ can only take values $\pm 1$.

In this paper, we aim to calculate unconditionally an asymptotic formula for the following mixed moment
\begin{equation}\label{momentequation}
    S_J(f,g';X):=\psum_{\substack{(d,2q_1q_2) = 1 \\ \omega(f\otimes \chi_{8d}) = 1 \\ \omega(g\otimes \chi_{8d}) = -1}}L(1/2, f\otimes \chi_{8d})\cdot L'(1/2,g\otimes \chi_{8d})\cdot J\bigg(\frac{8d}{X}\bigg)
\end{equation}
as $X \rightarrow \infty$. Note that for odd $d$, $8d$ is a fundamental discriminant. In the above, the asterisk on the top of the summation sign simply means that the summation is over square free integers, and $J(x)$ is taken to be a non-negative smooth function supported on the interval $[1/2,2]$.
Our main theorem states the following:
\begin{theorem}
    With the notation above, we have 
    \[
        S_J(f,g';X) = C_{f,g}\check{J}(0)X\log X + O_{f,g,J}(X(\log \log X)^5),
    \]
    in which $C_{f,g}$ is a constant depending only on $f$ and $g$, $\check{J}(0) = \int^{\infty}_{-\infty}J(x)dx$, and the implied constant in the error term depends only on the forms $f,g$ and $J(\cdot)$. Moreover, $C_{f,g} = 0$ if and only if $i^{\kappa_1}\eta_f = -1$ and $q_1$ is a square or $i^{\kappa_2}\eta_g = 1$ and $q_2$ is a square, in which case the moment $S_J(f,g';X) = 0$ identically.
    
\end{theorem}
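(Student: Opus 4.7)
The plan is to mimic the moment-calculation architecture of \cite{Li-MR4768632, Kumar.etc-MR4765788, zhou2025momentderivativesquadratictwists}, extended to a cross-moment that mixes an $L$-value with an $L'$-value.

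First I would encode the sign restrictions. Since $8d>0$ throughout the support of $J(8d/X)$, we have $\chi_{8d}(-1)=1$, so the conditions $\omega(f\otimes\chi_{8d})=1$ and $\omega(g\otimes\chi_{8d})=-1$ reduce, via quadratic reciprocity, to congruences on $d \pmod{8q_1q_2}$. This already explains the degenerate cases: if $q_1$ is a square, then $\chi_{8d}(q_1)=1$ uniformly and $\omega(f\otimes\chi_{8d})=i^{\kappa_1}\eta_f$ is constant, so when that constant is $-1$ the sum is empty, and analogously for $g$. After rewriting the sign indicator as $\tfrac14(1+\omega(f\otimes\chi_{8d}))(1-\omega(g\otimes\chi_{8d}))$, I would apply the approximate functional equation to $L(1/2,f\otimes\chi_{8d})$ in the even branch and to $L'(1/2,g\otimes\chi_{8d})$ in the odd branch. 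The latter yields
\[
L'(1/2,g\otimes\chi_{8d}) = 2\sum_{n\ge 1}\frac{\lambda_g(n)\chi_{8d}(n)}{\sqrt n}\,W_g\!\left(\frac{n}{X}\right),
\]
where the weight $W_g$ inherits a $\log(X/n)$ factor coming from differentiating the Mellin kernel at $s=1/2$. Multiplying the two approximate functional equations, interchanging summations and collecting all $d$-dependence turns $S_J(f,g';X)$ into a weighted sum of $\sum_{d}\chi_{8d}(mn)$ over squarefree $d$ in a fixed residue class.

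I would then evaluate this character sum by the standard device: detect squarefreeness through $\sum_{a^2\mid d}\mu(a)$ and apply Poisson summation modulo $8q_1q_2mn$. The diagonal contribution corresponds to $mn$ a perfect square; writing $n=mk^2$ and summing produces an Euler product in $\lambda_f,\lambda_g$ whose value at the edge $s=0$, convolved with the $\log(X/n)$ inside $W_g$, gives the leading term $C_{f,g}\check{J}(0)X\log X$. The explicit $C_{f,g}$ is assembled from these local factors and from the density of the residue classes dictated by the sign conditions, and its vanishing matches exactly the two cases singled out in the statement.

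The principal obstacle will be the off-diagonal dual sum from Poisson summation. It takes the form of shifted convolutions in $\lambda_f,\lambda_g$ twisted by smoothed quadratic characters and Bessel-type transforms of the cutoffs, with the extra $\log$-weight being the main novelty compared to the pure $L$-value moments. I would treat it by combining the shifted-convolution estimates of \cite{Li-MR4768632} with the Heath-Brown-style large-sieve treatment used for the derivative moment in \cite{zhou2025momentderivativesquadratictwists}, taking care to propagate the $\log$-weight through the Bessel transforms. Balancing these contributions is what delivers the $O(X(\log\log X)^5)$ error, with the exponent $5$ reflecting the cascade of smoothings from the two approximate functional equations, the Poisson step, the squarefree sieve, and the $\log$-derivative.
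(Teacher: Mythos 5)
Your proposal captures the broad contours of the standard architecture — encoding the sign conditions via $\tfrac14(1+\omega(f\otimes\chi_{8d}))(1-\omega(g\otimes\chi_{8d}))$, approximate functional equations with a double pole giving the $\log$-weight for $L'$, Möbius detection of squarefreeness, Poisson summation in $d$, main term from the zero frequency with $mnQ'$ a square, and Li's large-sieve input for the error — and you also correctly read off the vanishing mechanism of $C_{f,g}$ from the constancy of $\chi_{8d}(q_i)$ when $q_i$ is a square. But as written the plan has a genuine structural gap: you omit the truncation of the approximate functional equations at an auxiliary length $Y=X/(\log X)^{200}$ and the resulting four-fold decomposition $S_J=\tfrac14(\mathfrak I_1+\mathfrak I_2+\mathfrak I_3+\mathfrak I_4)$ into products of truncated pieces $\mathcal A_f,\mathcal A'_g$ and tail pieces $\mathcal B_f,\mathcal B'_g$. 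This is not cosmetic. The full approximate functional equation for $L(1/2,f\otimes\chi_{8d})$ has effective length $\asymp |d|\asymp X$, and after Poisson in $d$ the dual variable $k$ lives on a scale $\asymp mnQ'/X$; for $mn\gg X$ the nonzero frequencies are not suppressed and the Poisson expansion gives no savings. The paper sidesteps this exactly by isolating the short range $mn\ll Y^2\ll X^2/(\log X)^{400}$ in $\mathfrak I_1$ (where Poisson does give the main term plus a power-of-$\log$ saving), and routing all terms involving a $\mathcal B$-piece through Cauchy–Schwarz plus the bilinear-form/large-sieve estimate of Lemma~\ref{Li-biliear} rather than Poisson. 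Running Poisson directly on the untruncated sum, as your plan suggests, would stall precisely at the length threshold.

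A smaller inaccuracy: after Poisson the off-diagonal is not a family of shifted convolutions in $\lambda_f,\lambda_g$. It is a sum over nonzero $k$ of Gauss-type sums $G_k(mnQ')$ weighted by $\check J$ and the Mellin factors; the paper controls it by dyadic decomposition in $m,n$, shifting contours to exploit the rapid decay of $\gamma_1,\gamma_2,\tilde G$, and invoking the pointwise bound on $\psum_{k_1}T(k_1,\cdot)$ from Zhou's work. There is no shifted-convolution structure to exploit, and treating it as such would not lead anywhere. If you reinstate the truncation and the $\mathcal A$/$\mathcal B$ split and replace the ``shifted convolution'' step with the contour-shift-plus-$T(k_1,\cdot)$-bound machinery, your outline would match the paper's proof.
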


The study of moments of quadratically twisted central $L$-values and their derivatives associated with cusp forms has a rich history and deep applications in arithmetic geometry and the theory of automorphic representations. For foundational results on the first moments of twisted central $L$-values and their derivatives, we refer, for example, to \cite{BumpFriedbergHoffsteinMR1038358}, \cite{MurtyMurtyMR1109350}, \cite{Iwaniec-MR1081731}, \cite{Petrow-MR3180602}, and \cite{LuoRamakrishnanMR1474162}.

An asymptotic formula for the more challenging second moments of quadratically twisted central $L$-values of cusp forms was first established by Soundararajan and Young \cite{YoungSoundMR2677611}, where they proved an asymptotic formula of the form
\[
    \sum_{\substack{d \leq X\\ (d,2)=1}} L(1/2,f\otimes \chi_{8d})^2 = (c + o(1))X\log X
\]
under the assumption of the Generalized Riemann Hypothesis (GRH) for certain $L$-functions. Here, $c$ is a constant depending only on $f$. This result was later proven unconditionally by Li in \cite{Li-MR4768632}, thereby removing the dependence on GRH.

More recently, an asymptotic formula for the second moment of twisted central derivatives $L'(1/2, f\otimes \chi_{8d})$ was obtained by Kumar, Mallesham, Sharma, and Singh in \cite{Kumar.etc-MR4765788}, building on crucial estimates of bilinear forms from \cite{Li-MR4768632}. In a very recent work, Zhou \cite{zhou2025momentderivativesquadratictwists} established an asymptotic formula for the more delicate (smoothed) first moment of products of twisted central derivatives:
\[
    \sum_{d} L'(1/2,f\otimes \chi_{8d}) \cdot L'(1/2,g\otimes \chi_{8d}) F(8d/X),
\]
where $F \in C^{\infty}_c(\mathbb{R}^+)$ and $f$, $g$ are two \textit{distinct} cusp forms. However, with current techniques, the conjectured asymptotic formula for the product of twisted central values of two distinct forms $f$ and $g$,
\[
    \sum_d L(1/2,f\otimes \chi_{8d}) L(1/2,g\otimes \chi_{8d}) F(8d/X),
\]
remains out of reach.

Our main theorem also has implications for properties of elliptic curves over $\mathbb{Q}$. It is well known that for any elliptic curve $E$ over $\mathbb{Q}$ with conductor $q$ and Weierstrass equation of the form
\[
    E: y^2 = x^3 + Ax + B,\quad A,B\in \mathbb{Q},
\]
there exists (not necessarily uniquely) a cuspidal Hecke eigen-newform $f_E$ of level $q$ and weight $2$ such that the associated Hecke $L$-function agrees with the Hasse--Weil $L$-function of $E$, i.e., $L(s,f_E) = L(s,E)$. 

Given a fundamental discriminant $D$, one defines the quadratic twist of $E$, denoted $E^{(D)}$, by modifying the Weierstrass coefficients:
\[
    E^{(D)}: y^2 = x^3 + AD^2x + BD^3,
\]
which is again an elliptic curve over $\mathbb{Q}$. The $L$-function of this twist satisfies $L(s,E^{(D)}) = L(s,f_E\otimes \chi_D)$. After normalizing these $L$-functions so that the center of the critical strip is $s = \frac{1}{2}$, the celebrated Birch and Swinnerton-Dyer conjecture predicts that the order of vanishing of $L(s,E)$ (and thus $L(s,f_E)$), as well as $L(s,E^{(D)})$ (and $L(s,f_E \otimes \chi_D)$), at $s = \frac{1}{2}$ equals the rank of the corresponding Mordell--Weil group.

By the results of Kolyvagin \cite{KolyvaginMR954295} and Gross--Zagier \cite{GrossZagierMR833192} (see also Section C.16 of \cite{SilvermanMR2514094}), this conjecture is known to hold when the order of vanishing is at most $1$. As a consequence, our Theorem~1 yields the following corollary:

\begin{corollary}
    Let $E_1$ and $E_2$ be elliptic curves over $\mathbb{Q}$ with conductors $q_1$ and $q_2$, respectively, neither of which is a perfect square. Then there exist infinitely many fundamental discriminants of the form $8d$, where $(d,2)=1$, such that the Mordell--Weil rank of $E_1^{(8d)}$ is equal to $1$, and that of $E_2^{(8d)}$ is equal to $0$.
\end{corollary}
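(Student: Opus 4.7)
The plan is to deduce the corollary as a formal consequence of Theorem 1 applied to the weight-$2$ newforms attached to $E_1, E_2$ by modularity, followed by the analytic-rank-$\leq 1$ chain of Gross--Zagier and Kolyvagin.

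First I would set up the dictionary and verify that the leading constant does not vanish. By modularity, let $f_{E_1}, f_{E_2}$ be the Hecke newforms of weight $2$ and levels $q_1, q_2$ satisfying $L(s, E_i^{(8d)}) = L(s, f_{E_i}\otimes \chi_{8d})$ after the usual normalization. Theorem 1 places $L(1/2, f)$ on the first factor (which, when non-zero, forces analytic rank $0$ for the curve attached to $f$) and $L'(1/2, g)$ on the second (which, together with root number $-1$, forces analytic rank $1$ for the curve attached to $g$). Since we want $\operatorname{rank}(E_1^{(8d)}) = 1$ and $\operatorname{rank}(E_2^{(8d)}) = 0$, I apply Theorem 1 with $f := f_{E_2}$ and $g := f_{E_1}$, choosing any non-negative $J \in C^{\infty}_c([1/2,2])$ with $\check{J}(0) > 0$. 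Since $\kappa_1 = \kappa_2 = 2$ gives $i^{\kappa_1} = i^{\kappa_2} = -1$, the vanishing criterion of Theorem 1 reduces to: $C_{f,g} = 0$ iff ($\eta_{f_{E_2}} = +1$ and $q_2$ is a square) or ($\eta_{f_{E_1}} = -1$ and $q_1$ is a square). The hypothesis that neither $q_1$ nor $q_2$ is a perfect square rules both out, so $C_{f,g} \neq 0$.

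Next I would harvest infinitely many admissible discriminants from the asymptotic. Because $C_{f,g}\check{J}(0)\,X\log X$ dominates the error $O(X(\log\log X)^5)$ for $X$ sufficiently large, $S_J(f,g';X) \neq 0$ eventually; hence in every such dyadic window there is at least one odd squarefree $d$ coprime to $2q_1q_2$ with $d$ of size $\asymp X$ for which
\[
    L(1/2,\, f_{E_2}\otimes \chi_{8d}) \neq 0 \quad \text{and} \quad L'(1/2,\, f_{E_1}\otimes \chi_{8d}) \neq 0,
\]
while the sign conditions imposed in \eqref{momentequation} simultaneously enforce $\omega(f_{E_2}\otimes \chi_{8d}) = +1$ and $\omega(f_{E_1}\otimes \chi_{8d}) = -1$. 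These pairs (root number with non-vanishing $L$ or $L'$) pin the analytic order of vanishing at $s = 1/2$ to be exactly $0$ for $f_{E_2}\otimes\chi_{8d}$ and exactly $1$ for $f_{E_1}\otimes \chi_{8d}$. Since both orders are $\leq 1$, Gross--Zagier together with Kolyvagin promote them to Mordell--Weil ranks $0$ for $E_2^{(8d)}$ and $1$ for $E_1^{(8d)}$. Letting $X \to \infty$ produces infinitely many such $d$.

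There is essentially no analytic obstacle; the substance of the corollary is the bookkeeping match between the \emph{non-square conductor} hypothesis and the non-vanishing condition for $C_{f,g}$ at weight $2$, plus the standard conversion of analytic rank to algebraic rank when the former is at most one. All of the real work has already been absorbed into Theorem 1.
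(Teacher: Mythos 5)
Your proof is correct and follows the route the paper intends: apply Theorem~1 to the modular newforms $f := f_{E_2}$, $g := f_{E_1}$ (swapped so that the non-derivative factor governs $E_2$ and the derivative factor governs $E_1$), check that at weight $2$ the vanishing criterion for $C_{f,g}$ reduces precisely to the ``square conductor'' conditions excluded by hypothesis, conclude non-vanishing of the moment for large $X$ hence infinitely many admissible $d$ with the prescribed root numbers and non-vanishing $L$- and $L'$-values, and finish with Gross--Zagier and Kolyvagin. The only thing worth flagging (against the paper's own statement, not your argument) is that Theorem~1 requires $q_1,q_2$ odd, a hypothesis the corollary omits but implicitly inherits.
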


\section{Notations}
In this section, we briefly clarify the notations that will be frequently used throughout the paper. We adopt the Vinogradov symbols $\ll$ and $O(\cdot)$, with the understanding that the dependence of the implied constants will be specified when necessary. Additionally, $\Re(\cdot)$ will denote the real part of a complex variable.

As previously mentioned, we use $\psum$ to represent a summation over square-free integers. The notation $\dsum_N$ will be used to denote a summation over dyadic intervals, where $N$ takes values of the form $2^j$ with $j \geq 0$.

\section{Basic Tools}
In this section we record some tools that shall be crucial to our calculation, as well as some notations that will be convenient for our use in the following sections.

When estimating summations over fundamental discriminants, we will for several times apply the summation formula of Poisson type for real characters and extract the main contribution from the term corresponding to the zero frequency. We record the above summation formula, which was originally due to Soundararajan \cite[Lemma 2.6]{SoundMR1804529}, as the following lemma (see also \cite[Lemma 2.3]{Li-MR4768632}): 
\begin{lemma}\label{Poisson}
    For $F\in C^{\infty}_c(\mathbb{R}^+)$, $n$ odd and $Z > 0$, we have 
    \[
        \sum_{(d,2)=1}\bigg(\frac{d}{n}\bigg)F\bigg(\frac{d}{Z}\bigg) = \frac{Z}{2n}\bigg(\frac{2}{n}\bigg)\sum_{k \in \mathbb{Z}}(-1)^kG_k(n)\check{F}\bigg(\frac{kZ}{2n}\bigg),
    \]
    in which 
    \[
        G_k(n) := \bigg(\frac{1-i}{2} + \bigg(\frac{-1}{n}\bigg)\frac{1+i}{2}\bigg)\sum_{a\,( \text{mod}\,n)}\bigg(\frac{a}{n}\bigg)e\bigg(\frac{ak}{n}\bigg)
    \] is the Gauss-type sum, and 
    \[
        \check{F}(\xi ) = \int_{\mathbb{R}}(\cos(2\pi x\xi) + \sin(2\pi x\xi))F(x)dx
    \]
    is an integral transform of $F$.  
\end{lemma}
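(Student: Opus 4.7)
The plan is to apply Poisson summation after periodizing the sum over odd $d$. Since $n$ is odd, the function $d \mapsto \left(\frac{d}{n}\right)\mathbf{1}_{d \text{ odd}}$ has period $2n$, so I split
\[
\sum_{(d,2)=1} \left(\frac{d}{n}\right) F\!\left(\frac{d}{Z}\right) = \sum_{\substack{a \pmod{2n} \\ a \text{ odd}}} \left(\frac{a}{n}\right) \sum_{m \in \mathbb{Z}} F\!\left(\frac{a+2nm}{Z}\right),
\]
and apply ordinary Poisson summation to the inner $m$-sum. Writing $\tilde{F}(\xi) := \int_{\mathbb{R}} F(y) e(-y\xi)\,dy$, this yields $\frac{Z}{2n} \sum_{k \in \mathbb{Z}} \tilde{F}(kZ/2n) \cdot S_k$, where $S_k := \sum_{a \text{ odd} \pmod{2n}} \left(\frac{a}{n}\right) e(ka/2n)$.

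The next step is to reduce $S_k$ to a pure Gauss sum. Parametrizing the odd residues as $a = 2b+1$ with $b$ modulo $n$ and factoring out $e(k/2n)$, then substituting $c \equiv 2b+1 \pmod n$ (using that $2$ is invertible modulo $n$, with inverse $\bar{2} \equiv (n+1)/2 \pmod n$), and finally applying $c \mapsto 2c$, I extract a factor of $\left(\frac{2}{n}\right)$. The residual exponential phase simplifies through $1 - 2\bar{2} = -n$ to give $e(-k/2) = (-1)^k$, producing
\[
S_k = (-1)^k \left(\frac{2}{n}\right) \tau_k(n), \quad \tau_k(n) := \sum_{c \pmod n} \left(\frac{c}{n}\right) e\!\left(\frac{kc}{n}\right).
\]

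Finally, I convert the ordinary Fourier transform into the cosine-plus-sine transform by expanding $\check{F}(\xi) = \frac{1-i}{2}\tilde{F}(-\xi) + \frac{1+i}{2}\tilde{F}(\xi)$ and pairing contributions from frequencies $\pm k$. The arithmetic input is $\tau_{-k}(n) = \left(\frac{-1}{n}\right)\tau_k(n)$, a consequence of $\overline{\tau(n)} = \left(\frac{-1}{n}\right)\tau(n)$ together with $\tau_k(n) = \left(\frac{k}{n}\right)\tau(n)$ for $\gcd(k,n)=1$, extended to general $k$ via standard divisor reductions on the generalized Gauss sum. A direct computation, separating the two cases $n \equiv 1, 3 \pmod 4$, then shows that the paired contributions recombine into $\left[\frac{1-i}{2} + \left(\frac{-1}{n}\right)\frac{1+i}{2}\right]\tau_k(n)\,\check{F}(kZ/2n)$, matching $G_k(n)\check{F}(kZ/2n)$. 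The most delicate point I anticipate is the $k=0$ term, where $\tau_0(n)$ can fail to vanish for general odd $n$: a short inclusion-exclusion argument shows $\tau_0(n) = 0$ unless $n$ is an odd perfect square, in which case $\tau_0(n) = \phi(n)$ and $n \equiv 1 \pmod 4$, so the bracket equals $1$ and $\check{F}(0) = \tilde{F}(0)$, making both sides at $k=0$ coincide.
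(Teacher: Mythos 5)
The paper does not prove this lemma itself---it is quoted from Soundararajan (Lemma~2.6 of \cite{SoundMR1804529}) and Li (Lemma~2.3 of \cite{Li-MR4768632})---but your argument is correct and is essentially the standard proof given there: periodize the odd $d$'s modulo $2n$, apply Poisson summation, peel off the factor $(-1)^k(\tfrac{2}{n})$ from the resulting exponential sum, and then repackage $\tilde F$ into $\check F$ using $\tau_{-k}(n)=(\tfrac{-1}{n})\tau_k(n)$. One small stylistic remark: the identity $\tau_{-k}(n)=(\tfrac{-1}{n})\tau_k(n)$ follows immediately from the substitution $c\mapsto -c$ inside $\tau_k(n)$, valid for all $k$ including those not coprime to $n$, so there is no need to route through complex conjugation and the multiplicativity relation $\tau_k(n)=(\tfrac{k}{n})\tau(n)$ (which anyway only holds for $\gcd(k,n)=1$); and that same substitution argument at $k=0$ already forces $\tau_0(n)=0$ whenever $(\tfrac{-1}{n})=-1$, so the $k=0$ term is subsumed by the general pairing and needs no separate inclusion--exclusion treatment.
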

For $F\in C^{\infty}_c(\mathbb{R}^+)$, we may invoke Mellin inversion to rewrite $\check{F}$ as 
\[
    \check{F}(\xi) = 
    \frac{1}{2\pi i}\int_{(1/2)}\tilde{F}(1-s)\Gamma(s)(\cos + \text{sgn}(\xi)\sin)(\pi s/2)(2\pi|\xi|)^{-s}ds,
\]
where $\tilde{F}(s) := \int^{\infty}_0F(x)x^{s-1}dx$ is the Mellin transform of $F$. We also note that with $\Re(s)$ bounded, $\tilde{F}(s)$ acquires the following rapid decay property:
\[
    \tilde{F}(s) \ll_A (1+|s|)^{-A}
\]
for any $A>0$.

It shall be useful to explicitly evaluate the Gauss-type sums $G_k(\cdot)$'s appeared on the right hand side of Poisson summation. This work was accomplished in \cite[Lemma 2.3]{SoundMR1804529}, which shall be quoted as below:
\begin{lemma}\label{Gauss sums}
    For odd $m,n$ co-prime, we have 
    \[
        G_k(mn) = G_k(m)G_k(n).
    \]
    Also, if $p^{\alpha} || k$ (setting $\alpha = \infty$ if $k=0$), then 
    \[G_k(p^{\beta}) = \left\{
    \begin{aligned}
    0, & & \beta \leq \alpha \,\text{is odd}, \\
    \phi(p^{\beta}), & & \beta \leq \alpha\,\text{is even}, \\
    -p^{\alpha}, & & \beta = \alpha + 1 \,\text{is even}, \\ 
    \bigg(\frac{kp^{-\alpha}}{p}\bigg)p^{\alpha+1/2}, & & \beta = \alpha + 1\,\text{is odd}, \\
    0, & & \beta \geq \alpha + 2. 
    \end{aligned} \right.
\]
\end{lemma}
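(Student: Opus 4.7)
The plan is to establish multiplicativity first, then carry out a case analysis for the prime power evaluation. For the multiplicative property $G_k(mn) = G_k(m) G_k(n)$ with $m,n$ odd and coprime, I would parameterize residues $a \bmod mn$ via the Chinese Remainder Theorem as $a \equiv a_1 n \overline{n}_m + a_2 m \overline{m}_n \pmod{mn}$ with $a_1 \bmod m$ and $a_2 \bmod n$, where $\overline{n}_m$ is the inverse of $n$ modulo $m$ (and vice versa). The Jacobi symbol splits as $(a/mn) = (a_1/m)(a_2/n)$, and the additive character separates as $e(ak/mn) = e(a_1 \overline{n}_m k/m)\, e(a_2 \overline{m}_n k/n)$. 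After the changes of variables $b_1 = a_1 \overline{n}_m \pmod m$ and $b_2 = a_2 \overline{m}_n \pmod n$, extra Jacobi symbols $(n/m)(m/n)$ appear, which quadratic reciprocity rewrites as $(-1)^{\frac{m-1}{2}\frac{n-1}{2}}$. A four-case check on $m,n \pmod 4$ then confirms the identity $\epsilon_{mn} = \epsilon_m \epsilon_n \cdot (-1)^{\frac{m-1}{2}\frac{n-1}{2}}$ where $\epsilon_N := \frac{1-i}{2} + (-1/N)\frac{1+i}{2}$, completing multiplicativity.

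For the prime power evaluation, set $p^\alpha \| k$ and split according to $\beta$ relative to $\alpha$. If $\beta \leq \alpha$, then $p^\beta \mid k$ so the exponential is trivial and the sum reduces to $\sum_{a \bmod p^\beta}(a/p)^\beta$: this equals $\phi(p^\beta)$ when $\beta$ is even (the symbol is identically $1$ on units, and $\epsilon_{p^\beta}=1$ since $p^\beta \equiv 1 \pmod 4$) and vanishes when $\beta$ is odd (grouping by residue mod $p$ produces $p^{\beta-1}\sum_{r=1}^{p-1}(r/p) = 0$). If $\beta = \alpha + 1$, write $k = p^\alpha k'$ with $\gcd(k',p) = 1$; then $e(ak/p^\beta) = e(ak'/p)$ depends only on $a \bmod p$, and grouping by this residue gives $p^{\beta-1}$ times either the Ramanujan sum $\sum_{\gcd(r,p)=1}e(rk'/p) = -1$ (when $\beta$ is even, yielding $-p^\alpha$) or the classical quadratic Gauss sum $\sum_{r=1}^{p-1}(r/p)e(rk'/p) = (k'/p)\epsilon_p^{-1}\sqrt{p}$ (when $\beta$ is odd), which combines with $\epsilon_{p^\beta} = \epsilon_p$ (since $\beta$ odd forces $p^\beta \equiv p \pmod 4$) to give $(kp^{-\alpha}/p)\,p^{\alpha+1/2}$. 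Finally, when $\beta \geq \alpha + 2$, the same residue grouping reduces matters to an inner exponential sum on $\mathbb{Z}/p^{\beta-\alpha-1}\mathbb{Z}$ against $k'$, which vanishes because $p \nmid k'$ and $\beta - \alpha - 1 \geq 1$.

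The individual steps are elementary; the main obstacle is careful bookkeeping of the prefactor $\epsilon_N$. Verifying in the multiplicativity step that the reciprocity sign exactly matches the ratio $\epsilon_{mn}/(\epsilon_m \epsilon_n)$ across all four residue classes mod $4$, and ensuring in the odd $\beta = \alpha + 1$ case that the classical evaluation of the quadratic Gauss sum interacts correctly with $\epsilon_{p^\beta}$, are the spots where oversights would cost the most. None of these computations are deep, but their interplay is where the proof derives its content.
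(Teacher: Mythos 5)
Your proof is correct. The paper does not supply its own argument for this lemma; it quotes it from Soundararajan \cite[Lemma 2.3]{SoundMR1804529}, and your derivation (CRT plus quadratic reciprocity, with the four-case check $\epsilon_{mn}=\epsilon_m\epsilon_n(-1)^{\frac{m-1}{2}\frac{n-1}{2}}$, for multiplicativity; then prime-power evaluation via Ramanujan sums and the classical quadratic Gauss sum, tracking $\epsilon_{p^\beta}$ according to the parity of $\beta$) is the same standard computation carried out in that reference.
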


Oftenly, it would be convenient to dyadically partition summations, and we introduce a family of functions to aid us do so. We introduce $G \in C^{\infty}_c([3/4,2])$ satisfying
\begin{equation*}
    \begin{aligned}
        & G(x) = 1, \,x \in [1,3/2] \\
        & G(x) + G(x/2) = 1, \, x \in [1,3]. 
    \end{aligned}
\end{equation*}
Such function could be explicitly constructed (see \cite[Lemma 1.10]{WarnerMR722297}), and would satisfy 
\[
    G(x) + ... + G(x/2^J) = 1
\]
for any $x \in [1,3\cdot 2^{J-1}]$, in which $J\geq 1$ and $G(x) + ... + G(x/2^J)$ will be supported on $[3/4,2^{J+1}]$. We shall fix a choice of $G(x)$ throughout the rest of the paper. Clearly $\dsum_N G(x/N)$ is a well-defined function supported on $[3/4,\infty)$, and is equal to 1 for any $x \geq 1$.

Finally, the following bilinear form estimation due to Li \cite[Lemma 6.3]{Li-MR4768632} (with minor technical modification to generalize to arbitrary levels) shall be crucial for our use in section 7:
\begin{lemma}\label{Li-biliear}
    For any real $\mathcal{X}, N \geq 1$ and real $t$ as well as a positive integer $q$, we have 
    \[
        \sum_{\substack{(d,2)=1\\ d\leq \mathcal{X}}}\bigg|
        \sum_{(n,q)=1}\frac{\lambda_f(n)}{n^{1/2+it}}\bigg(\frac{8d}{n}\bigg)G(n/N)
        \bigg|^2 \ll 
        d(q)^5\mathcal{X}(1+|t|)^3\log(2+|t|),
    \]
where $d(\cdot)$ is the usual divisor counting function. Here the implied constant only depends on $f$.
\end{lemma}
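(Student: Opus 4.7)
The plan is to follow the strategy of Li in \cite{Li-MR4768632}, expanding the square and applying Lemma~\ref{Poisson} to the inner sum over fundamental discriminants, while carefully tracking the dependence on the level $q$. Since Lemma~\ref{Poisson} requires a smooth cutoff, the first step is to majorize the sharp condition $d\leq \mathcal{X}$ by a non-negative smooth weight $W(d/\mathcal{X})$ with $W\in C^{\infty}_c(\mathbb{R}^+)$ satisfying $W(x)\geq \mathbf{1}_{[0,1]}(x)$, losing only an absolute constant. Opening the square then rewrites the left-hand side as
\[
    \sum_{(n_1,q)=(n_2,q)=1}\frac{\lambda_f(n_1)\lambda_f(n_2)}{(n_1n_2)^{1/2}}\bigg(\frac{n_2}{n_1}\bigg)^{it}G\bigg(\frac{n_1}{N}\bigg)G\bigg(\frac{n_2}{N}\bigg)\sum_{(d,2)=1}\bigg(\frac{8d}{n_1n_2}\bigg)W\bigg(\frac{d}{\mathcal{X}}\bigg),
\]
and Lemma~\ref{Poisson} converts the innermost sum into a dual sum weighted by Gauss-type sums $G_k(n_1n_2)$.

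Next I would split into zero and non-zero frequencies. For $k=0$, Lemma~\ref{Gauss sums} forces $n_1n_2$ to be a perfect square, so the surviving diagonal reduces, after writing $n_1=ab^2$ and $n_2=ac^2$, to a sum bounded by $\mathcal{X}\log(2N)$ via the Rankin--Selberg estimate $\sum_{n\leq Y}|\lambda_f(n)|^2\ll Y$; this part is independent of $t$ and well within the claimed bound. For the non-zero frequencies I would invoke the Mellin-inversion formula for $\check W$ recorded just after Lemma~\ref{Poisson}, shift the contour to the right so that the rapid decay of $\tilde W$ effectively truncates $|k|$ at $n_1n_2\mathcal{X}^{-1}$ up to logarithmic factors, and factor $G_k(n_1n_2)$ across primes via Lemma~\ref{Gauss sums}. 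The resulting bilinear form in $n_1,n_2$ is twisted by a quadratic character of conductor dividing $k$ and can be estimated by recognising it as essentially $|L(1/2+it,f\otimes\chi_k)|^2$, to which one applies the convexity bound together with the functional equation.

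The $d(q)^5$ factor enters through bookkeeping: the coprimality conditions $(n_i,q)=1$ combined with the explicit local evaluation of $G_k$ at prime powers dividing $q$ contribute divisor sums of length at most $d(q)^5$. The main obstacle will be keeping the $t$-aspect under control and landing precisely on $(1+|t|)^3\log(2+|t|)$ rather than a worse polynomial in $|t|$; this forces a careful contour shift in the Mellin--Barnes representation of $\check W$ balancing the Stirling decay of the accompanying $\Gamma$-factor against the convexity bound for $L(1/2+it,f\otimes\chi_k)$, since a naive triangle inequality on the dual side would immediately be lossy in the $t$-aspect. By contrast, the dependence on $q$ is comparatively mild and follows from a standard divisor-counting argument applied to the local Gauss factors.
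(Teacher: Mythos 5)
The paper does not prove this lemma: it is simply quoted from Li \cite[Lemma~6.3]{Li-MR4768632} with a one-line remark about a ``minor technical modification to generalize to arbitrary levels,'' so there is no proof in the paper to compare against. Your blind attempt correctly identifies the broad framework (smooth the cutoff, open the square, apply Lemma~\ref{Poisson}, split at $k=0$), but two of its central steps do not hold up.

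First, the diagonal. After Poisson the $k=0$ term, with $n_1=ab^2$, $n_2=ac^2$ and $a$ squarefree, is of the shape $\mathcal X\sum_a a^{-1}\bigl|\sum_b \lambda_f(ab^2)b^{-1-2it}G(ab^2/N)\bigr|^2$ (up to a mild Euler factor). The triangle-inequality/Rankin--Selberg bound you describe gives $\ll\mathcal X\log(2N)$, but the target bound $d(q)^5\mathcal X(1+|t|)^3\log(2+|t|)$ has \emph{no} $N$-dependence whatsoever, so $\mathcal X\log(2N)$ is not ``well within the claimed bound'' once $N$ is much larger than, say, $\exp\{(1+|t|)^3\}$. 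To land on $O(\mathcal X)$ you must exploit oscillation in the inner $b$-sum (a short Dirichlet polynomial attached to $\mathrm{Sym}^2 f$ pulled back along squares), which the proposal does not do.

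Second, and more seriously, the off-diagonal analysis as sketched cannot work. The quantity after Poisson contains $G_k(n_1n_2)$ with $n_1$ and $n_2$ varying independently. Lemma~\ref{Gauss sums} only factors $G_k(n_1n_2)=G_k(n_1)G_k(n_2)$ when $(n_1,n_2)=1$; the contribution from $(n_1,n_2)>1$ is a genuine part of the sum and is not a square of any single $L$-series. Even on the coprime, squarefree part, what one obtains after Mellin-inverting $\check W$ and using $G_k(n_i)\approx\chi_{k^*}(n_i)\sqrt{n_i}$ is a \emph{product of two separate} twisted Dirichlet polynomials, not $|L(1/2+it,f\otimes\chi_k)|^2$; it cannot simply be ``recognised'' as a square and bounded by convexity. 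Moreover, your own back-of-the-envelope count gives an effective range $|k|\lesssim N^2/\mathcal X$; feeding the convexity bound for $L(1/2,f\otimes\chi_k)$ into a sum over $k$ of that length produces a quantity growing with $N$, again incompatible with an $N$-independent final bound, so additional cancellation in $k$ (and between $n_1$ and $n_2$) must be harvested. The remark that the $t$-aspect can be tamed by ``a careful contour shift'' and that the $d(q)^5$ ``follows from a standard divisor-counting argument'' is a statement of hope rather than a proof; these are precisely the places where the stated exponents $(1+|t|)^3\log(2+|t|)$ and $d(q)^5$ must be earned, and nothing in the sketch pins them down. As it stands, the proposal would reproduce a lossy bound of large-sieve type rather than Li's sharper estimate, and it does not fill the gap that the paper deliberately leaves to the cited reference.
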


\section{Set-up \& statement of main propositions}
A standard technique to express the central $L$-values or derivatives in terms of Fourier coefficients $\lambda_f(m), \lambda_g(n)$ will be the application of
approximate functional equations. We record the approximate functional equations for $L(1/2,f\otimes \chi_{8d})$ and $L'(1/2,g\otimes \chi_{8d})$ as below.
\begin{lemma}\label{ApprFuncEq}
    With the notations as above, we have 
    \begin{equation}\label{L(f)-AFE}
        L(1/2, f\otimes \chi_{8d}) = 
        \bigg(1+\omega(f\otimes \chi_{8d})\bigg)\cdot \sum_{m \geq 1}\frac{\lambda_f(m)\chi_{8d}(m)}{\sqrt{m}}W_1\bigg(\frac{m}{8|d|\sqrt{q_1}}\bigg) 
    \end{equation}
    and 
    \begin{equation}\label{L'(g)-AFE}
         L'(1/2, g\otimes \chi_{8d}) = 
         \bigg(1- \omega(g\otimes \chi_{8d})\bigg)\cdot \sum_{n \geq 1}\frac{\lambda_g(n)\chi_{8d}(n)}{\sqrt{n}}W_2\bigg(\frac{n}{8|d|\sqrt{q_2}}\bigg),
    \end{equation}
    in which
    \[
        W_1(x) = \frac{1}{2\pi i}\int_{(3)}(2\pi x)^{-w}\cdot \frac{\Gamma(\kappa_1/2+w)}{\Gamma(\kappa_1/2)}\frac{dw}{w}
    \]
    while
    \[
        W_2(y) = \frac{1}{2\pi i}\int_{(3)}(2\pi y)^{-u}\cdot \frac{\Gamma(\kappa_2/2 + u)}{\Gamma(\kappa_2/2)}\frac{du}{u^2}.
    \]
\end{lemma}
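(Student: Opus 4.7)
Both identities follow from the textbook contour-shift derivation of an approximate functional equation, and I will indicate the approach uniformly. Writing $D = 8d$ and $A_j(s) := (|D|\sqrt{q_j}/(2\pi))^s\,\Gamma(s+(\kappa_j-1)/2)$ for $j=1,2$, we have $\Lambda(s, f\otimes\chi_D) = A_1(s)\,L(s, f\otimes\chi_D)$ and similarly $\Lambda(s, g\otimes\chi_D) = A_2(s)\,L(s, g\otimes\chi_D)$. The strategy is to form a contour integral of $\Lambda(1/2+w,\cdot)$ against a kernel of the form $w^{-k}$, expand the Dirichlet series on the far right of the critical strip to recognise the right-hand side of \eqref{L(f)-AFE} or \eqref{L'(g)-AFE}, then shift to the corresponding left contour, pick up the residue at $w = 0$, and use the functional equation to re-express the shifted integral in terms of the original one.

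For \eqref{L(f)-AFE} take
\[
I_1 := \frac{1}{2\pi i}\int_{(3)}\Lambda(1/2+w, f\otimes \chi_D)\,\frac{dw}{w}.
\]
On $\Re(w) = 3$ the Dirichlet series converges absolutely, so interchanging sum and integral identifies $I_1$ with $A_1(1/2)\sum_{m\geq 1}\lambda_f(m)\chi_D(m)m^{-1/2}W_1(m/(8|d|\sqrt{q_1}))$. I shift the contour to $\Re(w) = -3$ --- legitimate thanks to the exponential decay of $\Gamma$ on vertical strips combined with convexity bounds for $L$ --- crossing a simple pole at $w = 0$ whose residue is $\Lambda(1/2, f\otimes \chi_D)$. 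On the shifted contour I apply the functional equation and then the change of variable $w \mapsto -w$: the differential $dw$ and the reversal of orientation each contribute a minus sign, so the kernel $1/w$ flips sign once overall and the shifted integral equals $-\omega(f\otimes\chi_D)\,I_1$. Rearranging gives $(1+\omega(f\otimes\chi_D))\,I_1 = \Lambda(1/2, f\otimes\chi_D)$, and dividing by $A_1(1/2)$ yields \eqref{L(f)-AFE}.

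The derivation of \eqref{L'(g)-AFE} is identical with the kernel $1/w^2$ in place of $1/w$. Setting
\[
I_2 := \frac{1}{2\pi i}\int_{(3)}\Lambda(1/2+w, g\otimes\chi_D)\,\frac{dw}{w^2},
\]
one has $I_2 = A_2(1/2)\sum_{n\geq 1}\lambda_g(n)\chi_D(n)n^{-1/2}W_2(n/(8|d|\sqrt{q_2}))$, and shifting to $\Re(w) = -3$ now produces the residue $\Lambda'(1/2, g\otimes\chi_D)$ from the double pole at $w = 0$. The substitution $w \mapsto -w$ in the shifted integral now preserves $1/w^2$, so the two minus signs cancel and the shifted integral equals $+\omega(g\otimes\chi_D)\,I_2$, giving $(1-\omega(g\otimes\chi_D))\,I_2 = \Lambda'(1/2, g\otimes\chi_D)$. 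Because $S_J(f,g';X)$ restricts to $\omega(g\otimes\chi_D) = -1$, the functional equation forces $L(1/2, g\otimes\chi_D) = 0$, so Leibniz gives $\Lambda'(1/2, g\otimes\chi_D) = A_2(1/2)\,L'(1/2, g\otimes\chi_D)$, and dividing by $A_2(1/2)$ yields \eqref{L'(g)-AFE}. The one place where any real attention is needed is the tracking of signs under $w \mapsto -w$, as the parity difference between $1/w$ and $1/w^2$ is exactly what produces the $(1+\omega)$ in one identity and the $(1-\omega)$ in the other; beyond that the proof is entirely mechanical.
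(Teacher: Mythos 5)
Your proof is correct and reproduces the standard contour-shift argument that the paper delegates to its references (\cite[Theorem~5.3]{IK-MR2061214} and \cite[Lemma~3.1]{Petrow-MR3180602}): integrate $\Lambda(1/2+w)$ against $w^{-1}$ (resp.\ $w^{-2}$) on $\Re(w)=3$, interchange with the absolutely convergent Dirichlet series to identify $I_1 = A_1(1/2)\sum_m \lambda_f(m)\chi_{8d}(m)m^{-1/2}W_1(m/(8|d|\sqrt{q_1}))$ (resp.\ $I_2$ analogously with $W_2$), shift the contour to $\Re(w)=-3$ picking up the residue $\Lambda(1/2)$ (resp.\ $\Lambda'(1/2)$), and apply the functional equation together with $w\mapsto -w$ to convert the shifted integral into $-\omega I_1$ (resp.\ $+\omega I_2$), yielding $(1\pm\omega)I_{1,2}=\Lambda(1/2)$ or $\Lambda'(1/2)$; the sign bookkeeping is exactly as you describe.

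One point you make that is worth keeping in mind: \eqref{L'(g)-AFE} as written is \emph{not} an identity for arbitrary $d$. Your derivation yields $(1-\omega(g\otimes\chi_{8d}))I_2=\Lambda'(1/2,g\otimes\chi_{8d})$, and to convert $\Lambda'(1/2)$ into $A_2(1/2)L'(1/2)$ via the Leibniz rule one needs $L(1/2,g\otimes\chi_{8d})=0$, which is forced by the functional equation precisely when $\omega(g\otimes\chi_{8d})=-1$. If instead $\omega(g\otimes\chi_{8d})=+1$, the right side of \eqref{L'(g)-AFE} vanishes identically while $L'(1/2,g\otimes\chi_{8d})$ need not (only $\Lambda'(1/2)=0$ is forced, and $A_2'(1/2)L(1/2)$ may be nonzero). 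This is harmless for the paper because the lemma is only invoked in the sum $S_J$, which restricts to $\omega(g\otimes\chi_{8d})=-1$, but it is good that you flagged the hypothesis under which \eqref{L'(g)-AFE} is actually valid.
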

\begin{proof}
    These are consequences of the holomorphy as well as the functional equations of the complete $L$-functions $\Lambda(s,f\otimes \chi_{8d})$ and $\Lambda(s,g\otimes \chi_{8d})$, after an application of the Residue Theorem. See \cite[Theorem 5.3]{IK-MR2061214} and \cite[Lemma 3.1]{Petrow-MR3180602} for further details.
\end{proof}

For notational simplicity, we shall denote $\frac{\Gamma_1(\kappa_1/2+w)}{\Gamma(\kappa_1/2)}$ and $\frac{\Gamma_2(\kappa_2/2+u)}{\Gamma(\kappa_2/2)}$ respectively by $\gamma_1(w)$ and $\gamma_2(u)$.
Notice that by Stirling's formula, we have the following estimation for $\gamma_1,\gamma_2$ when $\Re(w)$ and $\Re(u)$ are bounded:
\[
    \gamma_1(w) \ll_{\kappa_1} (1+|w|)^{-20},\,\gamma_2(u) \ll_{\kappa_2} (1 + |u|)^{-20},
\]
in which the implied constants depends only on $\kappa_1$ or $\kappa_2$.

In practice, it would be convenient to "truncate" the right hand sides of \eqref{L(f)-AFE} and \eqref{L'(g)-AFE} by an auxiliary parameter $Y := \frac{X}{(\log X)^{200}}$, and set
\[
    \mathcal{A}_f = \mathcal{A}_f(Y; 8d) := \bigg(1+\omega(f\otimes \chi_{8d})\bigg)\cdot \sum_{m \geq 1}\frac{\lambda_f(m)\chi_{8d}(m)}{\sqrt{m}}W_1\bigg(\frac{m}{Y}\bigg)
\]
while 
\[
    \mathcal{B}_f = \mathcal{B}_f(Y; 8d) := L(1/2,f\otimes \chi_{8d}) - \mathcal{A}_f,
\]
and similarly
\[
    \mathcal{A}'_{g} = \mathcal{A}'_{g}(Y;8d) 
    := \bigg(1-\omega(g\otimes \chi_{8d})\bigg)\cdot \sum_{n \geq 1}\frac{\lambda_g(n)\chi_{8d}(n)}{\sqrt{n}}W_2\bigg(\frac{n}{Y}\bigg)   
\]
and 
\[
    \mathcal{B}'_g = \mathcal{B}'_g(Y; 8d) := L'(1/2,g\otimes \chi_{8d}) - \mathcal{A}'_g.
\]

After truncation, we may rewrite our target sum \eqref{momentequation} as:
\[
    S_J(f,g';X) = \psum_{\substack{(d,2q_1q_2) = 1 \\ \omega(f\otimes \chi_{8d}) = 1 \\ \omega(g\otimes \chi_{8d}) = -1}}L(1/2, f\otimes \chi_{8d})\cdot L'(1/2,g\otimes \chi_{8d})\cdot J\bigg(\frac{8d}{X}\bigg)
    =
    \frac{1}{4}\cdot \big(\mathfrak{I}_1  +\mathfrak{I}_2 +\mathfrak{I}_3 +\mathfrak{I}_4\big), 
\]
in which
\begin{equation*}\label{AfA'g}
    \mathfrak{I}_1 = \psum_{\substack{(d,2q_1q_2) = 1 \\ \omega(f\otimes \chi_{8d}) = 1 \\ \omega(g\otimes \chi_{8d}) = -1}}\mathcal{A}_f\mathcal{A}'_g J\bigg(\frac{8d}{X}\bigg),
\end{equation*}
\begin{equation*}\label{AfB'g}
    \mathfrak{I}_2 = \psum_{\substack{(d,2q_1q_2) = 1 \\ \omega(f\otimes \chi_{8d}) = 1 \\ \omega(g\otimes \chi_{8d}) = -1}}\mathcal{A}_f\mathcal{B}'_g J\bigg(\frac{8d}{X}\bigg),
\end{equation*}
\begin{equation*}\label{BfA'g}
    \mathfrak{I}_3 = \psum_{\substack{(d,2q_1q_2) = 1 \\ \omega(f\otimes \chi_{8d}) = 1 \\ \omega(g\otimes \chi_{8d}) = -1}}\mathcal{B}_f\mathcal{A}'_g J\bigg(\frac{8d}{X}\bigg),
\end{equation*}
and 
\begin{equation*}\label{BfB'g}
    \mathfrak{I}_4 = \psum_{\substack{(d,2q_1q_2) = 1 \\ \omega(f\otimes \chi_{8d}) = 1 \\ \omega(g\otimes \chi_{8d}) = -1}}\mathcal{B}_f\mathcal{B}'_g J\bigg(\frac{8d}{X}\bigg).
\end{equation*}

We summarize the estimation for the above $\mathfrak{I}_1$, $\mathfrak{I}_2$, $\mathfrak{I}_3$ and $\mathfrak{I}_4$ as the following four propositions:
\begin{proposition}\label{I_1-Est}
    With the above notations, we have the following:
    \[
        \mathfrak{I}_1 = C_{f,g}\check{J}(0)X\log X + O_{f,g,\epsilon}(X),
    \]
    where $C_f,g$ is a constant defined as
    \begin{align*}
        C_{f,g} &:=    \frac{L(1,f\otimes g)L(1,\text{Sym}^2f)L(1,\text{Sym}^2 g)}{2\pi ^2}  \\
        &\times \big\{\mathcal{Z}(0,0;1) + i^{\kappa_1}\eta_f\mathcal{Z}(0,0;q_1) - 
        i^{\kappa_2}\eta_g\mathcal{Z}(0,0;q_2)- i^{\kappa_1 + \kappa_2}\eta_f\eta_g\mathcal{Z}(0,0;q_1q_2)\big\}
    \end{align*}
    and the implied constant in $O_{f,g}(\cdot)$ is only dependent on the forms $f,g$ and $\epsilon > 0$. Here 
    $\mathcal{Z}(w, u;Q')$ is an absolutely convergent and uniformly bounded Euler product defined on $\Re(w), \Re(u) > -\frac{1}{4}$. Moreover, $C_{f,g} = 0$ if and only if $i^{\kappa_1}\eta_f = -1$ and $q_1$ is a square or $i^{\kappa_2}\eta_g = 1$ and $q_2$ is a square, in which case the moment $S_J(f,g';X)$ vanishes identically.
\end{proposition}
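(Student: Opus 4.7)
The plan is to unfold $\mathcal{A}_f\mathcal{A}'_g$ into a double Dirichlet sum, expand the root-number factors into four $\chi_{8d}$-twists, attack the $d$-sum with Poisson summation (Lemma~\ref{Poisson}), extract the main contribution from the zero frequency, and absorb the non-zero frequencies into the error $O_{f,g,\epsilon}(X)$.

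First I would substitute the explicit root numbers $\omega(f\otimes\chi_{8d}) = i^{\kappa_1}\eta_f\chi_{8d}(-q_1)$ and $\omega(g\otimes\chi_{8d}) = i^{\kappa_2}\eta_g\chi_{8d}(-q_2)$, use $\chi_{8d}(-1)=1$ (valid since $d>0$ in the support of $J$), and expand $(1+\omega(f\otimes\chi_{8d}))(1-\omega(g\otimes\chi_{8d}))$ into four terms indexed by $Q'\in\{1,q_1,q_2,q_1q_2\}$ with respective coefficients $1,\;i^{\kappa_1}\eta_f,\;-i^{\kappa_2}\eta_g,\;-i^{\kappa_1+\kappa_2}\eta_f\eta_g$. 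Because these factors already vanish outside the sign constraints, the outer restriction on $\omega(f),\omega(g)$ is redundant and may be dropped, so $\mathfrak{I}_1$ decomposes into a signed combination of four sums of the form
\[
    \mathfrak{J}(Q')=\psum_{(d,2q_1q_2)=1}\sum_{m,n\geq 1}\frac{\lambda_f(m)\lambda_g(n)}{\sqrt{mn}}\,\chi_{8d}(mnQ')\,W_1\!\left(\tfrac{m}{Y}\right)W_2\!\left(\tfrac{n}{Y}\right)J\!\left(\tfrac{8d}{X}\right).
\]

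Next I would remove the squarefree condition via $\mu^2(d)=\sum_{a^2\mid d}\mu(a)$, write $d=a^2e$, and apply Lemma~\ref{Poisson} to the resulting $e$-sum with $Z=X/(8a^2)$. By Lemma~\ref{Gauss sums}, the zero frequency $k=0$ contributes only when $mnQ'$ is a perfect square, in which case $G_0(mnQ')=\phi(mnQ')$. Replacing $W_1, W_2$ by their Mellin representations converts this zero-frequency part into a double contour integral whose integrand contains a Dirichlet series $D(w,u;Q')$ supported on $mnQ'=\square$; a prime-by-prime Euler-product computation (treating primes dividing $Q'$ separately from the rest) factors this as
\[
    D(w,u;Q')=L(1{+}w{+}u,f\otimes g)\,L(1{+}2w,\mathrm{Sym}^2 f)\,L(1{+}2u,\mathrm{Sym}^2 g)\,\mathcal{Z}(w,u;Q'),
\]
with $\mathcal{Z}(w,u;Q')$ an absolutely convergent and uniformly bounded Euler product on $\mathrm{Re}(w),\mathrm{Re}(u)>-\tfrac14$. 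Shifting both contours past $w=u=0$ picks up the residue from the simple pole in $w$ and the double pole in $u$ (the latter arising from the $1/u^2$ weight in $W_2$); the double pole produces the factor $\log Y = \log X + O(\log\log X)$ via $\frac{d}{du}Y^{u}\big|_{u=0}$, and summing the four $Q'$-contributions with their signs gives precisely the asserted $C_{f,g}\check{J}(0)X\log X$, where $\check{J}(0)=\int J$ arises from $\check{F}(0)$ in Lemma~\ref{Poisson}.

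The main obstacle will be bounding the $k\neq 0$ Poisson frequencies by $O_{f,g,\epsilon}(X)$. Here the rapid decay of $\check{J}$ from Lemma~\ref{Poisson} effectively truncates the frequency range, the (essential) support of $W_1,W_2$ restricts $m,n\ll Y(\log Y)^A = X/(\log X)^{190}$, and the Weil-type bound $|G_k(mnQ')|\ll(mnQ')^{1/2+\epsilon}$ from Lemma~\ref{Gauss sums} combined with the Deligne bounds on $\lambda_f,\lambda_g$ should deliver the required saving; this step is technical but parallels the analogous dual-sum estimates in \cite{Li-MR4768632} and \cite{Kumar.etc-MR4765788}. Finally, for the vanishing of $C_{f,g}$: if $q_1$ is a perfect square, then $\chi_{8d}(q_1)=1$ identifies the characters in $\mathfrak{J}(1)$ with those in $\mathfrak{J}(q_1)$, and the characters in $\mathfrak{J}(q_2)$ with those in $\mathfrak{J}(q_1q_2)$, so $\mathcal{Z}(0,0;q_1)=\mathcal{Z}(0,0;1)$ and $\mathcal{Z}(0,0;q_1q_2)=\mathcal{Z}(0,0;q_2)$; when additionally $i^{\kappa_1}\eta_f=-1$, the four terms of $C_{f,g}$ cancel pairwise, consistent with the sign constraint $\omega(f\otimes\chi_{8d})=1$ being vacuous (so $S_J(f,g';X)\equiv 0$). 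The symmetric argument handles the case where $q_2$ is a square and $i^{\kappa_2}\eta_g=1$.
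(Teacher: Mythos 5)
Your outline is right up to and including the diagonal (zero-frequency) analysis: the expansion into the four twists $\mathfrak{J}(Q')$, the Euler product factorization through $L(1+w+u,f\otimes g)\,L(1+2w,\mathrm{Sym}^2f)\,L(1+2u,\mathrm{Sym}^2g)\,\mathcal{Z}(w,u;Q')$, the residue computation at the simple pole $w=0$ and double pole $u=0$ producing $\log Y=\log X+O(\log\log X)$, and the identity $\mathcal{Z}(0,0;q_1)=\mathcal{Z}(0,0;1)$, $\mathcal{Z}(0,0;q_1q_2)=\mathcal{Z}(0,0;q_2)$ when $q_1=\square$, are all correct and match the paper. However, there are two connected gaps in the error analysis that make the argument as written fail.

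First, your treatment of the non-zero Poisson frequencies is too crude. You propose to control $k\neq 0$ by the bound $|G_k(mnQ')|\ll(mnQ')^{1/2+\epsilon}$ together with Deligne. Carrying this out: the rapid decay of $\check{J}$ forces $|k_1|k_2^2\ll \frac{mnQ'a^2b}{X}(\log X)^A$, so the number of admissible frequencies is $\asymp \frac{mnQ'a^2b}{X}(\log X)^A$, and each term contributes $\frac{1}{\sqrt{mn}}\cdot\frac{(mnQ')^{1/2+\epsilon}}{mnQ'}\asymp (mn)^{-1+\epsilon}$. Summing over $m,n\ll Y(\log X)^A$ then yields a bound of rough order $\frac{a^2b}{X}\cdot Y^{2+\epsilon}\asymp a^2b\, X^{1+\epsilon}(\log X)^{-400+A}$, and after multiplying by the $X/16$ prefactor one lands near $X^{2+\epsilon}$ divided by a power of $\log X$ --- nowhere near the required $O(X)$. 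The saving does not come from pointwise bounds on the Gauss sums; it comes from cancellation over the dual variable $k_1$, captured by the bilinear form estimate Lemma~\ref{Li-biliear} (Li's Lemma~6.3) which the paper invokes, through the input (5.9) of \cite{zhou2025momentderivativesquadratictwists}, after a dyadic decomposition in $m,n$ and a careful contour-shift bookkeeping. Without this bilinear ingredient, the off-diagonal estimate does not close.

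Second, you never truncate the auxiliary $a$-sum. Removing the squarefree condition introduces $\sum_{a}\mu(a)$ over $a\ll\sqrt{X}$, and Poisson summation on the inner $e$-sum (where $d=a^2e$) gives a dual sum whose length grows as $a$ grows, since the analytic conductor is $\asymp X/a^2$. Even with the bilinear estimate, the resulting off-diagonal bound in the paper is $T^{1,n}_{Q'}(l)\ll a^2b\,2^{-l}(\log X)^{-99}$, so after dividing by $a^2$ and summing, one only gets $S^{1,n}_{Q'}\ll XZ(\log X)^{-99}$ where $Z$ is the cutoff; one must therefore split the $a$-sum at some $Z=(\log X)^{80}$ (or similar) and bound the $a>Z$ tail by a completely different argument --- in the paper, Cauchy--Schwarz combined with the second moment bounds of \cite[Prop.~6.2]{Li-MR4768632} and \cite[Prop.~3.2]{Kumar.etc-MR4765788}, giving $S^1_{Q'}(a>Z)\ll X(\log X)^{2+\epsilon}/Z^{1-\epsilon}$. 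Your proposal treats all $a$ uniformly through Poisson, which is not enough. You should incorporate both the truncation of $a$ at $Z$ and the bilinear form estimate into the off-diagonal step; the rest of the argument is fine.
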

\begin{proposition}
    With the above notations, we have the following: 
    \[
        \mathfrak{I}_2 \ll_{f,g} X(\log \log X),
    \]
    where the implied constant depends only on the forms $f,g$.
\end{proposition}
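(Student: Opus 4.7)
The plan is to apply Cauchy--Schwarz over the square-free $d$-sum in $\mathfrak{I}_2$, separating the factors $\mathcal{A}_f$ and $\mathcal{B}'_g$, and then estimate the two resulting second moments
\[
M_{\mathcal{A}} := \psum_{\substack{(d,2q_1q_2)=1 \\ \omega(f\otimes \chi_{8d})=1 \\ \omega(g\otimes \chi_{8d})=-1}} |\mathcal{A}_f|^2 J\!\left(\frac{8d}{X}\right)
\quad\text{and}\quad
M_{\mathcal{B}'} := \psum_{\substack{(d,2q_1q_2)=1 \\ \omega(f\otimes \chi_{8d})=1 \\ \omega(g\otimes \chi_{8d})=-1}} |\mathcal{B}'_g|^2 J\!\left(\frac{8d}{X}\right)
\]
separately, with Lemma~\ref{Li-biliear} as the main harmonic-analytic input. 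Since $|\mathfrak{I}_2| \leq (M_{\mathcal{A}} M_{\mathcal{B}'})^{1/2}$, it suffices to prove $M_{\mathcal{A}} \ll X$ and $M_{\mathcal{B}'} \ll X(\log\log X)^2$.

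For $M_{\mathcal{A}}$, I would use the Mellin definition of $W_1(m/Y)$ to express $\mathcal{A}_f$ as a contour integral on $\Re(w) = 3$ against the absolutely convergent sum $\sum_m \lambda_f(m) \chi_{8d}(m) m^{-1/2-w}$, weighted by $Y^w \gamma_1(w)/w$. Opening $|\mathcal{A}_f|^2$, summing over $d$, and dyadically decomposing the $m$-variable at scales $N \leq Y(\log X)^{O(1)}$ (outside which $W_1$ is negligibly small), Lemma~\ref{Li-biliear} applies with the $t$-parameter coming from the Mellin variable. The factor $(1+|t|)^3 \log(2+|t|)$ in Lemma~\ref{Li-biliear} is absorbed by the rapid decay of $\gamma_1$, delivering $M_{\mathcal{A}} \ll X$.

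For $M_{\mathcal{B}'}$, the starting identity is
\[
\mathcal{B}'_g = \bigl(1-\omega(g\otimes \chi_{8d})\bigr) \cdot \frac{1}{2\pi i}\int_{(3)} \frac{\gamma_2(u) \bigl[(8|d|\sqrt{q_2})^u - Y^u\bigr]}{u^2 (2\pi)^u}\, L(1/2+u, g\otimes \chi_{8d})\, du.
\]
Shifting the contour to $\Re(u) = -1/2 - \delta$, the only pole crossed lies at $u = 0$: the simple zero of $(8|d|\sqrt{q_2})^u - Y^u$ at $u=0$ reduces the double pole of $u^{-2}$ to a simple one, with residue proportional to $\log(8|d|\sqrt{q_2}/Y) \cdot L(1/2, g\otimes \chi_{8d})$. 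The prefactor $(1-\omega(g\otimes \chi_{8d}))$ kills this residue identically: when $\omega(g\otimes \chi_{8d}) = +1$ the prefactor vanishes, and when $\omega(g\otimes \chi_{8d}) = -1$ the functional equation forces $L(1/2, g\otimes \chi_{8d}) = 0$. (The gamma ratio $\gamma_2(u)$ is holomorphic on the strip $-1/2-\delta \leq \Re(u) \leq 3$ because $\kappa_2 \geq 2$.) On the shifted contour, I would apply the functional equation to replace $L(1/2+u, g\otimes \chi_{8d})$ by a multiple of $L(1/2-u, g\otimes \chi_{8d})$, which is now an absolutely convergent Dirichlet series in $n$.

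Interchanging summation and integration, $\mathcal{B}'_g$ becomes a smoothly weighted character sum over $n$ against $\chi_{8d}$, whose second moment in $d$ can be handled dyadically by Lemma~\ref{Li-biliear}. The crucial quantitative input is that on $\Re(u) = -1/2 - \delta$ the factor $|(8|d|\sqrt{q_2})^u - Y^u|/|u|^2$ is of size $\log(Y_0/Y)/(1+|u|) \asymp \log\log X/(1+|u|)$ for bounded $|u|$, because $Y_0/Y = (\log X)^{200}$; applying Cauchy--Schwarz on the $u$-integral and using the rapid decay of $\gamma_2$ then inserts a $(\log\log X)^2$ factor and yields $M_{\mathcal{B}'} \ll X(\log\log X)^2$. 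The main obstacle will be to do this bookkeeping tightly enough that the $\log\log X$ gain coming from the vanishing residue is not swallowed by spurious $\log X$ losses in the dyadic summations or in the functional-equation transform: the choice $Y = X/(\log X)^{200}$ is calibrated precisely so that the tail-contribution factor $\log(Y_0/Y)$ remains of size $O(\log\log X)$, and making this rigorous is the technical heart of the estimate.
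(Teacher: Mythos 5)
There is a fatal gap in the very first step: the claim $M_{\mathcal{A}} \ll X$ is false, and this sinks the whole Cauchy--Schwarz strategy. The quantity $\mathcal{A}_f$ is, up to the small tail $\mathcal{B}_f$, the central value $L(1/2,f\otimes\chi_{8d})$, and the smoothed second moment of twisted central values is of size $\asymp X\log X$ (this is precisely the Soundararajan--Young / Li asymptotic). The diagonal $m_1m_2 = \square$ in $|\mathcal{A}_f|^2$ produces a genuine $\log Y \asymp \log X$ after the $k=0$ term of Poisson summation, and nothing in the Mellin/dyadic setup you describe can remove it. Even quoting Lemma~\ref{Li-biliear} directly, you get an $\ll X$ bound at each fixed dyadic scale $N$, but there are $\asymp \log Y$ scales in $m \leq Y(\log X)^{O(1)}$, so the honest output is $M_{\mathcal{A}} \ll X(\log X)^{O(1)}$; the truth is $M_{\mathcal{A}} \asymp X\log X$. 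Combined with your (plausible) $M_{\mathcal{B}'} \ll X(\log\log X)^2$, Cauchy--Schwarz yields only $\mathfrak{I}_2 \ll X\sqrt{\log X}\,\log\log X$, which overshoots the required $X\log\log X$ by a full $\sqrt{\log X}$.

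The reason the paper can do better is that it never separates the two factors. It expands $\mathcal{A}_f\mathcal{B}'_g$ as a double sum over $m,n$, applies Mobius to remove squarefreeness and then Poisson summation to the $d$-sum, and isolates the $k=0$ term with $mnQ'=\square$. In the resulting contour representation the factor coming from $W_2(n/(8a^2bd\sqrt{q_2}))-W_2(n/Y)$ contributes $(x\sqrt{q_2}X/Y)^u - 1$, which vanishes to first order at $u=0$; this cancels one of the two poles of $u^{-2}$ and replaces the would-be $\log X$ (from the double pole) by $\log(X/Y) \asymp \log\log X$. This saving is a \emph{correlation} between $\mathcal{A}_f$ and $\mathcal{B}'_g$ that Cauchy--Schwarz discards by fiat. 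The off-diagonal $k\neq 0$ terms and the $a>Z$ range are then bounded via the Kumar--Mallesham--Sharma--Singh and Li estimates plus Lemma~\ref{Li-biliear}, exactly as you anticipate for $M_{\mathcal{B}'}$. Your observations that $L(1/2,g\otimes\chi_{8d})=0$ on the relevant $d$-set, and that the calibration $Y=X/(\log X)^{200}$ is what delivers $\log(Y_0/Y)\asymp\log\log X$, are both correct and are the right heuristic for why the answer is $X\log\log X$ --- but they must be exploited inside the diagonal of the \emph{mixed} Poisson sum, not through splitting the product.
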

\begin{proposition}
    With the notations above, we have the following:
    \[
        \mathfrak{I}_3 \ll_{f,g} X,
    \]
    where the implied constant depends only on the forms $f,g$.
\end{proposition}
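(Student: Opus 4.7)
The plan is to exploit the approximate functional equation (Lemma~\ref{ApprFuncEq}) together with the definition of $\mathcal{A}_f$ to exhibit a cancellation in the leading main term of $\mathfrak{I}_3$. Setting $N := 8|d|\sqrt{q_1}$ and writing $\omega_f := \omega(f\otimes\chi_{8d})$, $\omega_g := \omega(g\otimes\chi_{8d})$, Lemma~\ref{ApprFuncEq} gives $L(1/2, f\otimes \chi_{8d}) = (1+\omega_f)\sum_m \lambda_f(m)\chi_{8d}(m) W_1(m/N)/\sqrt{m}$, whence
\[
\mathcal{B}_f = (1+\omega_f)\sum_m \frac{\lambda_f(m)\chi_{8d}(m)}{\sqrt{m}}\bigl[W_1(m/N) - W_1(m/Y)\bigr].
\]
Substituting this together with the expansion of $\mathcal{A}'_g$ and the identity $\mathbf{1}_{\omega_f=1,\omega_g=-1} = \tfrac{1}{4}(1+\omega_f)(1-\omega_g)$ into $\mathfrak{I}_3$ produces four sums of the form
\[
\psum_d\sum_{m,n} \frac{\lambda_f(m)\lambda_g(n)\chi_{8d}(mnQ')}{\sqrt{mn}}\bigl[W_1(m/N)-W_1(m/Y)\bigr] W_2(n/Y)\, J(8d/X),
\]
indexed by $Q'\in\{1,q_1,q_2,q_1q_2\}$ with coefficients in $\{\pm 1\}$.

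After M\"obius inversion to remove the squarefree condition and Poisson summation (Lemma~\ref{Poisson}) applied to each $d$-sum, I separate the $k=0$ main term from the $k\neq 0$ off-diagonal contribution. Using the Mellin representations of $W_1, W_2$ and performing the $d$-averaging, the $k=0$ piece becomes a double contour integral in $(w,u)$ whose integrand carries the factor
\[
\Delta(w) := \bigl(X\sqrt{q_1}/(2\pi)\bigr)^w\int_{0}^{\infty}J(x)x^w\,dx \;-\; (Y/(2\pi))^w \check{J}(0),
\]
in which the first term arises from Poisson-transforming the $d$-dependent weight $|d|^w J(8d/X)$ at frequency zero, and the second from the $d$-independent cutoff $Y$. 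Crucially $\Delta(0)=\check{J}(0)-\check{J}(0)=0$, so $\Delta(w)/w$ is holomorphic at $w=0$ and cancels the simple pole from $W_1$'s Mellin kernel. Shifting the $w$-contour to $\Re(w)=-\delta$ for small fixed $\delta > 0$ (permissible since $\mathcal{Z}(w,u;Q')$ is analytic on $\Re(w),\Re(u)>-1/4$) encounters no residue, and combining $|\Delta(w)|\ll X^{-\delta}(\log X)^{O(\delta)}$ with the Gamma decay $\gamma_1(w)\ll (1+|w|)^{-20}$ plus the $u$-integral contributing at most $O(\log X)$ (from its double pole at $u=0$) yields a total $k=0$ contribution of size $O_{f,g}(X^{1-\delta}\log X) = o(X)$.

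For the $k\neq 0$ off-diagonal contribution I proceed analogously to the corresponding step in the proof of Proposition~\ref{I_1-Est}: dyadic decomposition of the $m$- and $n$-sums at scales $M\ll N\asymp X$ and $N'\ll Y$, evaluation of the Gauss sums $G_k(mnQ')$ via Lemma~\ref{Gauss sums}, and application of Lemma~\ref{Li-biliear} (after Cauchy--Schwarz on $d$) to each dyadic piece. The $(1+|t|)^3\log(2+|t|)$-growth in Lemma~\ref{Li-biliear} is absorbed by the Gamma decay $\gamma_1(w)\gamma_2(u)\ll(1+|w|)^{-20}(1+|u|)^{-20}$ in the Mellin variables, and the oscillatory weight $\check{F}(kZ/(2mnQ'))$ effectively restricts $|k|\ll X^{\epsilon}$. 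Summing the resulting bounds over the dyadic scales, over $Q'\in\{1,q_1,q_2,q_1q_2\}$, and over $k$ yields a total off-diagonal contribution of $O_{f,g}(X)$, which combined with the $k=0$ bound gives $\mathfrak{I}_3 = O_{f,g}(X)$.

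The main obstacle is the off-diagonal step: since the $m$-sum in $\mathcal{B}_f$ extends up to $N\asymp X$ (longer than the range $M \ll Y$ used for $\mathcal{A}_f$), the accumulated dyadic contribution must be controlled at $O(X)$ without logarithmic losses. This requires careful balancing of the dyadic scale $M$ with the height of the Mellin contour so that Lemma~\ref{Li-biliear}'s polynomial $|t|$-growth is fully absorbed by the Gamma decay while preserving the square-root cancellation needed for the final $O(X)$ bound.
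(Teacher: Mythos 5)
You correctly isolate the key structural feature of the zero-frequency term: after Poisson summation, the weight attached to the $\mathcal{B}_f$-factor is (in the paper's notation) $\check{H}_w(0)$ with $H_w(s)=J(s)\big((s\sqrt{q_1}X)^w-Y^w\big)$, which vanishes identically at $w=0$ and therefore cancels the simple pole of $\gamma_1(w)/w$. This is exactly the observation the paper uses to shift the $w$-contour to $\Re(w)=-\tfrac{1}{5}$ crossing only a double pole at $u=0$, and your $O(X^{1-\delta}\log X)$ bound is of the same quality as the paper's $XY^{-1/5}\log Y$. So the diagonal piece of your argument matches the paper's in both spirit and outcome.

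There are, however, two concrete gaps. First, you omit the split $a\le Z$ versus $a>Z$ after M\"obius removal of the squarefree condition. This split is not cosmetic: the approximation $\sum_{(a,2mnQ)=1,\,a\le Z}\mu(a)a^{-2}=\tfrac{8}{\pi^2}\prod_{p\mid mnQ}(1-p^{-2})^{-1}+O(Z^{-1})$ needed to produce the $L$-function Euler product carries a truncation error of size $X(\log X)^{63}/Z$, the off-diagonal bound per fixed $a$ is $\ll a^2b/(\log X)^{99-\epsilon}$ so the $a$-sum must be cut at $Z$ to stay summable, and the $a>Z$ tail requires the separate argument via Cauchy--Schwarz and the moment upper bounds of Li and of Kumar--Mallesham--Sharma--Singh; the final choice $Z=(\log X)^{80}$ balances these three. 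Second, your off-diagonal treatment is internally inconsistent: once Poisson summation has replaced the $d$-sum by a $k$-sum, there is no $d$-variable left on which to Cauchy--Schwarz, so Lemma~\ref{Li-biliear} (which requires a sum over $d$) simply cannot be invoked at that stage. (In the paper, Lemma~\ref{Li-biliear} appears only in the estimation of $\mathfrak{I}_4$, applied \emph{before} Poisson summation in the regime where both $m$- and $n$-sums are long.) The off-diagonal of $\mathfrak{I}_3$ is instead handled by contour shifts to $\pm 6$ and to $(\log X)^{-1}$, depending on the dyadic range of $N_1,N_2$ relative to $Y$ and $X$, together with the pointwise bound on $\sum_{k_1}^*|T(k_1,1/2+it_1,1/2+it_2,Q')|$ from (5.9) of \cite{zhou2025momentderivativesquadratictwists}. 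Finally, the claim that the $\check J$-weight restricts $|k|\ll X^\epsilon$ is far too strong: with $m\ll X$, $n\ll Y$, $a\le Z$, the argument $kX/(16mnQ'a^2b)$ allows $|k|$ up to roughly $YZ^2\asymp X(\log X)^{-40}$, and controlling this long $k$-range is precisely what the $k=k_1k_2^2$ decomposition and the shifted sums $T(k_1,\cdot,\cdot,Q',c+it)$ are designed for.
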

and finally 
\begin{proposition}\label{I_4-Est}
    With the above notations, we have the following:
    \[
        \mathfrak{I}_4 \ll X(\log \log X)^{5} ,
    \]
    where the implied constant depends only on the forms $f, g$.
\end{proposition}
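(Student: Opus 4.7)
The plan is to bound $\mathfrak{I}_4$ by Cauchy--Schwarz in $d$, reducing to two second-moment estimates that are each controlled via Mellin representations of the tails together with Lemma \ref{Li-biliear}.

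First, starting from the defining series and the Mellin representations of $W_1, W_2$, I would write
\[
    \mathcal{B}_f = \frac{2}{2\pi i}\int_{(3)} (2\pi)^{-w}\gamma_1(w)\, L(1/2+w, f\otimes\chi_{8d})\bigl[(8|d|\sqrt{q_1})^w - Y^w\bigr]\frac{dw}{w}
\]
and the analogous integral for $\mathcal{B}'_g$ with $\gamma_2(u)/u^2$ in place of $\gamma_1(w)/w$. Shifting contours down to $\Re(w) = \Re(u) = 1/\log X$ crosses only the potential poles at $w=0$ and $u=0$. But $(8|d|\sqrt{q_j})^s - Y^s$ vanishes at $s = 0$, which cancels the simple pole of $1/w$ in $\mathcal{B}_f$; and for $\mathcal{B}'_g$ the residue at $u=0$ is a multiple of $L(1/2, g\otimes\chi_{8d})\log(8|d|\sqrt{q_2}/Y)$, which vanishes because the constraint $\omega(g\otimes\chi_{8d}) = -1$ forces $L(1/2, g\otimes\chi_{8d}) = 0$. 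Both contour shifts are therefore clean.

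Next, I would dyadically partition the implicit $m$- and $n$-sums via the function $G$. Shifts of contour in the Mellin representation of $V_1(m;d) := W_1(m/(8|d|\sqrt{q_1})) - W_1(m/Y)$ show that $V_1(m;d) \ll_A (m/X)^A + (Y/m)^A$ for any $A > 0$, so only $O(\log\log X)$ dyadic scales $N_1 \in [Y(\log X)^{-A}, X(\log X)^A]$ contribute above an acceptable error, and similarly for $N_2$. Applying Cauchy--Schwarz in $d$ reduces matters to bounding, for each pair of dyadic scales, the second moments
\[
    \Sigma_f(N_1) := \sum_{d}^{*} \Bigl|\sum_m \frac{\lambda_f(m)\chi_{8d}(m)}{\sqrt m}V_1(m;d)G(m/N_1)\Bigr|^2 J(8d/X)
\]
and its $g$-analogue $\Sigma'_g(N_2)$. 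To estimate $\Sigma_f(N_1)$ I would substitute the Mellin representation of $V_1(m;d)$ on the line $\Re(w) = 1/\log X$, separating a $d$-independent weight $(2\pi)^{-w}\gamma_1(w)[(8|d|\sqrt{q_1})^w - Y^w]/w$ from the bilinear form
\[
    T_f(w;N_1,d) := \sum_{(m,q_1)=1}\frac{\lambda_f(m)\chi_{8d}(m)}{m^{1/2+w}}G(m/N_1),
\]
with the primes dividing $q_1$ treated by a standard multiplicative splitting. On this line the weight is bounded uniformly in $d$ by $\min(\log\log X, 1/|w|)(1+|w|)^{-20}$. Cauchy--Schwarz inside the Mellin integral and Lemma \ref{Li-biliear} then yield $\Sigma_f(N_1) \ll_f X(\log\log X)^{O(1)}$, and analogously for $\Sigma'_g(N_2)$ with one extra $\log\log X$ coming from the $1/u^2$. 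Summing over the $O(\log\log X)^2$ pairs of scales and applying the outer Cauchy--Schwarz delivers the claimed bound $X(\log\log X)^5$.

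The main obstacle will be the precise tracking of logarithmic factors to reach the exponent $5$. The extra $1/u^2$ in $\mathcal{B}'_g$ costs one $\log\log X$ at each Mellin step more than $\mathcal{B}_f$, the factor $\log(2+|t|)$ from Lemma \ref{Li-biliear} contributes after integration against the Gamma decay, and the $O(\log\log X)$ dyadic scales add another layer; keeping the exponent exactly at $5$ requires arranging the two Cauchy--Schwarz applications optimally and exploiting the sharp $(1+|w|)^{-20}$ decay of $\gamma_j$ together with the refined $\min(\log\log X, 1/|w|)$ bound for the $d$-dependent weight.
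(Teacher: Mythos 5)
Your approach breaks down in the small--$N_2$ regime, and the residue argument you use to justify skipping it is not applicable. There are two separate problems.

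First, the contour-shift discussion is misplaced. Moving the $u$-integral from $\Re(u)=3$ to $\Re(u)=1/\log X$ does not cross the pole at $u=0$, since $1/\log X>0$, so no residue is ever picked up there and the vanishing of $L(1/2,g\otimes\chi_{8d})$ plays no role at that step. The place where one would genuinely want to exploit $L(1/2,g\otimes\chi_{8d})=0$ is a shift to $\Re(u)<0$, which one would do only \emph{after} dyadic partitioning (the Dirichlet series no longer converges on $\Re(u)=1/\log X$, so ``shift first, partition second'' is not a coherent order of operations). But after partitioning, the residue at $u=0$ for a single dyadic piece is proportional to $\log(8|d|\sqrt{q_2}/Y)\sum_n G(n/N_2)\lambda_g(n)\chi_{8d}(n)n^{-1/2}$, which is not the central value and does not vanish; the cancellation that produces $L(1/2,g\otimes\chi_{8d})=0$ only happens after summing over all scales $N_2$, and Cauchy--Schwarz applied scale by scale destroys exactly that cancellation.

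Second, and more seriously, the claimed decay of the truncated kernels is wrong for the $g$-side. Because $W_2$ has a \emph{double} pole at the origin of its Mellin integral, $W_2(y)=-\log(2\pi y)+\psi(\kappa_2/2)+O_A(y^A)$ as $y\to 0$, so
\[
V_2(n;d)\;=\;W_2\!\Big(\tfrac{n}{8|d|\sqrt{q_2}}\Big)-W_2\!\Big(\tfrac{n}{Y}\Big)\;=\;\log\!\Big(\tfrac{8|d|\sqrt{q_2}}{Y}\Big)+O_A\big((n/Y)^A\big)\;\asymp\;\log\log X
\]
uniformly for $n\ll Y$ and $d\asymp X$. (The decay statement you wrote, which also has a typo --- it should read $(m/Y)^A$, not $(Y/m)^A$ --- is only valid for $V_1$, where the pole in $1/w$ is simple and $W_1(y)=1+O_A(y^A)$.) Consequently, for every dyadic scale $N_2\le Y$ one has $\Sigma'_g(N_2)\asymp X(\log\log X)^2$ by Lemma~\ref{Li-biliear}, and summing $\sqrt{\Sigma'_g(N_2)}$ over the $\asymp\log Y\asymp\log X$ such scales costs a factor $\log X$ that cannot be afforded: the plan would yield at best $\mathfrak{I}_4\ll X(\log X)(\log\log X)^{O(1)}$, far worse than $X(\log\log X)^5$. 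This is precisely why the paper does \emph{not} treat the regimes $N_1\le Y$ or $N_2\le Y$ by Cauchy--Schwarz; it instead unfolds $(1+\omega(f\otimes\chi_{8d}))(1-\omega(g\otimes\chi_{8d}))$ and the squarefree condition and runs Poisson summation (Lemma~\ref{Poisson}), using the joint vanishing of $H_{w,u}(x)$ along $w=0$ and $u=0$ to capture the needed cancellation in the diagonal term. The per-scale Cauchy--Schwarz plus bilinear-form estimate is only used for $Y<N_1,N_2$, where the logarithmic growth of $V_2$ has been cut off. Your proposal therefore needs to be supplemented by something equivalent to the Poisson/diagonal-cancellation argument for the small-$N_2$ (and small-$N_1$) ranges.
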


Clearly, Theorem 1 follows immediately from Proposition 1 - 4, whose proof shall span the next four sections.

\section{Estimating $\mathfrak{I}_1$}
In this section we treat $\mathfrak{I}_1$ and prove Proposition 1.

Following the technique in \cite{Iwaniec-MR1081731} and \cite[Proposition 6.2]{Li-MR4768632}, we shall prove Proposition \ref{I_1-Est} by applying Poisson summation formula for real characters, which allows us to extract the main term from the zero frequency term and control the magnitude of other terms.  

Upon using the following notation
\[
    S^1_{Q'} := \psum_{\substack{(d,2q_1q_2) = 1}}\sum_{m}\sum_{n}\lambda_f(m)\lambda_g(n)(mn)^{-\frac{1}{2}}\chi_{8d}(mnQ')W_1\bigg(\frac{m}{Y}\bigg)W_2\bigg(\frac{n}{Y}\bigg)J\bigg(\frac{8d}{X}\bigg),
\]
we see that 
\[
    \mathfrak{I}_1 = S^1_1 + i^{\kappa_1}\eta_fS^1_{q_1} - i^{\kappa_2}\eta_gS^1_{q_2} - i^{\kappa_1 + \kappa_2}\eta_f\eta_gS^1_{q_1q_2},
\]
and thus we essentially only need to treat $S^1_{Q'}$ for $Q' = 1,q_1,q_2$ and $q_1q_2$.

By inserting Mobius functions appropriately we may drop the co-prime and square-free conditions in the outermost $d$-summation, and get 
\[  
    S^1_{Q'} = S^{1}_{Q'}(a \leq Z) + S^{1}_{Q'}(a > Z),
\]
in which 
\begin{equation}\label{T(aleqZ)}
\begin{aligned}
    S_{Q'}^{1}(a \leq Z) & = \sum_{\substack {(a,2Q) = 1 \\ a \leq Z}}\mu(a) \sum_{b|Q}\mu(b)\sum_{(m,2a)=1}\sum_{(n,2a)=1}\frac{\lambda_f(m)\lambda_g(n)}{\sqrt{mn}} \\
    & \times \sum_{(d,2)=1}\chi_{8db}(mnQ')W_1\bigg(\frac{m}{Y}\bigg)W_2\bigg(\frac{n}{Y}\bigg)J\bigg(\frac{8a^2bd}{X}\bigg)
\end{aligned}
\end{equation}
and 
\begin{equation*}
\begin{aligned}
    S_{Q'}^{1}(a > Z) & = \sum_{\substack {(a,2Q) = 1 \\ a > Z}}\mu(a)\sum_{(d,2Q)=1}J\bigg(\frac{8a^2d}{X}\bigg)\sum_{(m,2a)=1}\sum_{(n,2a)=1}\frac{\lambda_f(m)\lambda_g(n)}{\sqrt{mn}} \\
    & \times \chi_{8d}(mnQ')W_1\bigg(\frac{m}{Y}\bigg)W_2\bigg(\frac{n}{Y}\bigg)
\end{aligned}
\end{equation*}
in which $Z$ is a large parameter to be determined and $Q = q_1q_2$ for convenience.

To estimate $S_{Q'}^{1}(a>Z)$, we combine Proposition 6.2 of \cite{Li-MR4768632}  (which could be generalized to level $q_1$ with minor technical efforts) and Proposition 3.2 of \cite{Kumar.etc-MR4765788} as well as Cauchy-Schwarz to conclude that 
\begin{equation}\label{S1Q'a>Z}
    S^1_{Q'}(a > Z) \ll \frac{X(\log X)^{2+\epsilon}}{Z^{1-\epsilon}}.
\end{equation}

The estimation of $S_{Q'}^{1}(a \leq Z)$ requires more effort, and we shall see that the main term of the desired asymptotic formula shall be derived from $S_{Q'}^{1}(a \leq Z)$. An application of the Poisson summation formula for real characters Lemma \ref{Poisson} shall transform $S_{Q'}^{1}(a \leq Z)$ as:
\begin{equation*}
    \begin{aligned}
        S^{1}_{Q'}(a \leq Z) & =
        \frac{X\check{J}(0)}{2\pi^2}\cdot \mathop{\sum\sum}_{\substack{mnQ' = \square \\ (mn,2)=1}}\lambda_f(m)\lambda_g(n)(mn)^{-\frac{1}{2}}\\
        & \times \prod_{p|mnQ}(1-p^{-1})\big(\prod_{p|mnQ}(1-p^{-2})^{-1}+O(Z^{-1})\big)\cdot 
        W_1(m/Y)W_2(n/Y) \\
        & + \frac{X}{16}\sum_{\substack{(a,2Q)=1 \\ a \leq Z}}\mu(a)\sum_{b|Q}\mu(b)\sum_{k\neq 0}(-1)^k \\
        &\times \mathop{\sum\sum}_{\substack{mnQ' = \square \\ (mn,2a)=1}}\frac{\lambda_f(m)\lambda_g(n)}{\sqrt{mn}}\frac{\chi_b(mnQ')}{a^2bmnQ'}G_k(mnQ')\check{J}\bigg(\frac{kX}{16mnQ'a^2b}\bigg)W_1(m/Y)W_2(n/Y) \\
        & := S^{1,d}_{Q'}(a \leq Z) + S^{1,n}_{Q'}(a \leq Z) 
    \end{aligned}
\end{equation*}
where $S^{1,d}_{Q'}(a \leq Z)$ and $S^{1,n}_{Q'}(a \leq Z)$ respectively denotes the first and the second term above, and $G_k(\cdot)$ is the quadratic Gauss sum as in Lemma \ref{Gauss sums} and we have used the fact that $G_0(mnQ') = 0$ when $mnQ'$ is not a square and is $\phi(mnQ')$ otherwise. In the first term $S^{1,d}_{Q'}(a \leq Z)$ above we have used the following elementary estimation
\begin{equation}\label{asymptotics for Mobius}
    \sum_{\substack{(a,2mnQ)=1 \\ a \leq Z}}\mu(a)a^{-2} = \frac{8}{\pi^2}\prod_{p|mnQ}(1-p^{-2})^{-1} + O(Z^{-1})
\end{equation}
where the implied constant is absolute. Invoking the multiplicativity of Hecke eigenvalues while unfolding the definitions of $W_1$ and $W_2$, we may transform the "diagonal term" $S^{1,d}_{Q'}(a \leq Z)$ as 
\begin{equation*}
    \begin{aligned}
         S^{1,d}_{Q'}(a \leq Z) & = \frac{X\check{J}(0)}{2\pi^2}\bigg\{\frac{1}{(2\pi i)^2}\int_{(3)}\int_{(3)}\gamma_1(w)\gamma_2(u)\bigg(\frac{Y}{2\pi}\bigg)^{u+w}\cdot w^{-1}u^{-2} \mathcal{E}(w,u;Q')dwdu\\
        & + \cdot 
\mathop{\sum\sum}_{\substack{mnQ' = \square \\ (mn,2)=1}}O(Z^{-1})\cdot \lambda_f(m)\lambda_g(n)(mn)^{-\frac{1}{2}}W_1(m/Y)W_2(n/Y)\bigg\},
    \end{aligned}
\end{equation*}
in which $\mathcal{E}(w,u;Q')$ is the Euler product defined as
\begin{equation*}
    \begin{aligned}
        & \mathcal{E}(w,u;Q') =
         \prod_{p\nmid 2Q}\bigg\{
            1 + \frac{p}{p+1}\bigg(\frac{1}{2}\big(1-\lambda_f(p)p^{-(1/2+w)} + p^{-(1+2w)}\big)^{-1}\big(1-\lambda_g(p)p^{-(1/2+u)} + p^{-(1+2u)}\big)^{-1}  \\
        &    + \frac{1}{2}\big(1+\lambda_f(p)p^{-(1/2+w)} + p^{-(1+2w)}\big)^{-1}\big(1+\lambda_g(p)p^{-(1/2+u)} + p^{-(1+2u)}\big)^{-1}-1
        \bigg)\bigg\} \\
         &\times \prod_{p|Q}\frac{p}{p+1}\bigg\{
        \frac{1}{2}\big(1-\lambda_f(p)p^{-(1/2+w)} + \chi_{0,q_1}(p)p^{-(1+2w)}\big)^{-1}\big(1-\lambda_g(p)p^{-(1/2+u)} + \chi_{0,q_2}(p)p^{-(1+2u)}\big)^{-1}  \\
        &    + (-1)^{\nu_p(Q')}\frac{1}{2}\big(1+\lambda_f(p)p^{-(1/2+w)} + \chi_{0,q_1}(p)p^{-(1+2w)}\big)^{-1}\big(1+\lambda_g(p)p^{-(1/2+u)} + \chi_{0,q_2}(p)p^{-(1+2u)}\big)^{-1}  
        \bigg\},
    \end{aligned}
\end{equation*}
where
$\chi_{0,\mathfrak{q}}(\cdot)$ is the principal character modulo $\mathfrak{q} \in \mathbb{N}$ and $Q' = \prod_{p}p^{\nu_p(Q')}$.
As in \cite{Li-MR4768632} and \cite{zhou2025momentderivativesquadratictwists}, we could relate $\mathcal{E}(w,u;Q')$ to certain $L$-functions associated to the forms $f$ and $g$ as 
\begin{equation*}
    \mathcal{E}(w,u;Q') = L(1+u+w,f\otimes g)
    L(1+2w, \text{Sym}^2f)L(1+2u,\text{Sym}^2g)\mathcal{Z}(w,u;Q')
\end{equation*}
in which $\mathcal{Z}(w,u;Q')$ is an Euler product which is absolutely convergent and uniformly bounded, when $\Re(w),\Re(u) \geq -\frac{1}{4} + \epsilon$ where $\epsilon > 0$ could be taken arbitrarily small.

We directly estimate the second term in the above as
\begin{equation}\label{Z^-1-Estimating}
\begin{aligned}
    & \mathop{\sum\sum}_{\substack{mnQ' = \square \\(mn,2)=1}} O(Z^{-1})\cdot \lambda_f(m)\lambda_g(n)(mn)^{-\frac{1}{2}}W_1(m/Y)W_2(n/Y) \\
    & \ll Z^{-1}\mathop{\sum\sum}_{mnQ' = \square}d(m)d(n)(mn)^{-\frac{1}{2}}|W_1(m/Y)W_2(n/Y)| \\
    & \ll (\log X)^{63}\cdot Z^{-1},
\end{aligned}
\end{equation}
using Deligne's bound and the decay properties of $W_1$ and $W_2$. To bound the second line of the above we have used (1.80) of \cite{IK-MR2061214} and Cauchy-Schwarz.

Now, combining \eqref{Z^-1-Estimating} we shift the two integral lines $\Re(w) = 3$ and $\Re(u) = 3$ to $\Re(w) = -\frac{1}{5}$ and $\Re(u) = -\frac{1}{5}$ and invoke the Residue Theorem to conclude 
\begin{equation}\label{S1dQ'aleqZ}
\begin{aligned}
    S^{1,d}_{Q'}(a \leq Z) & = \frac{\check{J}(0)\cdot X\log Y}{2\pi^2}L(1,f\otimes g)L(1,\text{Sym}^2f)L(1,\text{Sym}^2g)\mathcal{Z}(0,0;Q') \\
    & + O(X + X(\log X)^{63}/Z),
\end{aligned}
\end{equation}
where the implied constant depends only on the forms $f$ and $g$. In particular, with $\mathcal{Z}(w,u;Q')$'s absolutely convergent on 
$\Re(w), \Re(u) \geq -\frac{1}{4} + \epsilon$, we may directly check as in \cite{Petrow-MR3180602} that 
\[
    \mathcal{Z}(0,0;1) + i^{\kappa_1}\eta_f\mathcal{Z}(0,0;q_1) - i^{\kappa_2}\eta_g\mathcal{Z}(0,0;q_2) -i^{\kappa_1+\kappa_2}\eta_f\eta_g\mathcal{Z}(0,0;q_1q_2) = 0,
\]
hence the main term in the moment \eqref{momentequation} vanishes, if and only if $i^{\kappa_1}\eta_f = -1$ and $q_1 = \square$ or $i^{\kappa_2}\eta_g = 1$ and $q_2 = \square$,
in which case the moment vanishes identically.

It remains to estimate the "off-diagonal" term $S^{1,n}_{Q'}(a \leq Z)$, which is given by 
\begin{equation*}
    \frac{X}{16} \sum_{\substack{(a,2Q)=1 \\ a \leq Z}}\mu(a)\sum_{b|Q}\mu(b)\sum_{k\neq 0}(-1)^k \mathop{\sum\sum}_{ (mn,2a)=1}\frac{\lambda_f(m)\lambda_g(n)}{\sqrt{mn}}\frac{\chi_b(mnQ')G_k(mnQ')}{a^2bmnQ'}\check{J}\bigg(\frac{kX}{16mnQ'a^2b}\bigg)W_1\bigg(\frac{m}{Y}\bigg)W_2\bigg(\frac{n}{Y}\bigg).
\end{equation*}
By Lemma \ref{Gauss sums}, we notice that $G_{4k}(n) = G_{k}(n)$, and thus $S^{1,n}_{Q'}(a\leq Z)$ could be re-written as 
\begin{equation*}
    S^{1,n}_{Q'}(a\leq Z) = \frac{X}{16}\sum_{\substack{(a,2Q)=1 \\ a \leq Z}}\frac{\mu(a)}{a^2}\sum_{b|Q}\frac{\mu(b)}{b}\big\{
        \sum_{l\geq 1}T^{1,n}_{Q'}(l) - T^{1,n}_{Q'}(0)\big\},
\end{equation*}
where for $l\geq 0$
\begin{equation*}
        T^{1,n}_{Q'}(l) := \psum_{(k_1,2)=1}\sum_{\substack{(k_2,2)=1\\ k_2 \geq 1}}\mathop{\sum\sum}_{ (mn,2a)=1}\frac{\lambda_f(m)\lambda_g(n)\chi_b(mnQ')}{\sqrt{mn}}W_1(m/Y)W_2(n/Y)\frac{G_{2^{\delta(l)}k_1k_2^2}(mnQ')}{mnQ'}\check{J}\bigg(\frac{2^lk_1k_2^2X}{16mnQ'a^2b}\bigg),
\end{equation*}
where $\delta(l) = 1$ if $2 \nmid l$ and is $0$ otherwise. Due to the structural similarity of the above $T^{1,n}_{Q'}(l)$'s, we shall only estimate $T^{1,n}_{Q'}(0)$, as the other terms could be treated with minor modifications.

Upon inserting dyadic partitions of unity to the $m$ and $n$ summations, and appealing to the definitions of $W_1(\cdot)$ and $W_2(\cdot)$, we may rewrite $T^{1,n}_{Q'}(0)$ as 
\begin{equation*}
    \begin{aligned}
         T^{1,n}_{Q'}(0) & = \frac{1}{(2\pi i)^2}  \dsum_{N_1}\dsum_{N_2}\int_{(3)}\int_{(3)}
            \gamma_1(w)\gamma_2(u)w^{-1}u^{-2}\bigg(\frac{Y}{2\pi}\bigg)^{w+u}\psum_{(k_1,2)=1}\sum_{\substack{(k_2,2)=1 \\ k_2\geq 1}}\mathop{\sum\sum}_{(mn,2a)=1}\frac{\lambda_f(m)\lambda_g(n)\chi_b(mnQ')}{\sqrt{mn}}     \\
            & \times \frac{G_{k_1k_2^2}(mnQ')}{mnQ'}m^{-w}n^{-u}\check{J}\bigg(\frac{k_1k_2^2X}{16mnQ'a^2b}\bigg)G(m/N_1)G(n/N_2)dwdu.
    \end{aligned}
\end{equation*}
We now define $V(x) = G(x/2) + G(x) + G(2x)$, thus $\text{supp}(G) \subset \text{supp}(V)$. We thus insert $V(m/N_1)$ and $V(n/N_2)$ into the $m,n$-summations, and invoke Mellin inversion on $G(\cdot)$'s to derive 
\begin{equation*}
    \begin{aligned}
         T^{1,n}_{Q'}(0) & = \frac{1}{(2\pi i)^4}  \dsum_{N_1}\dsum_{N_2}\int_{(3)}\int_{(3)}
            \gamma_1(w)\gamma_2(u)(2\pi)^{-(w+u)}\bigg(\frac{Y}{N_1}\bigg)^w
            \bigg(\frac{Y}{N_2}\bigg)^uw^{-1}u^{-2} \\
            & \times \int_{(0)}\int_{(0)}\psum_{(k_1,2)=1}T(k_1,1/2+v_1,1/2+v_2,Q')N_1^{v_1}N_2^{v_2}\tilde{G}(v_1-w)\tilde{G}(v_2-u)dv_2dv_1dwdu,
    \end{aligned}
\end{equation*}
where as in \cite{zhou2025momentderivativesquadratictwists} 
\begin{equation*}
    T(k_1,\alpha,\beta,Q') := \sum_{\substack{(k_2,2)=1 \\ k_2\geq 1}}\mathop{\sum\sum}_{(mn,2a)=1}\frac{\lambda_f(m)\lambda_g(n)\chi_b(mnQ')}{m^{\alpha}n^{\beta}} V\bigg(\frac{m}{N_1}\bigg)V\bigg(\frac{n}{N_2}\bigg)    
    \frac{G_{k_1k_2^2}(mnQ')}{mnQ'}\check{J}\bigg(\frac{k_1k_2^2X}{16mnQ'a^2b}\bigg)
\end{equation*}
for $\Re(\alpha) = \Re(\beta) = 1/2$.

Now, we split the dyadic summation ranges according to $Y$, which break the above into four parts 
\begin{equation*}
    T^{1,n}_{Q'}(0) = T^{1,n}_{Q'}(N_1 \leq Y, N_2\leq Y) + T^{1,n}_{Q'}(N_1 \leq Y, N_2 > Y) + T^{1,n}_{Q'}(N_1 > Y, N_2\leq Y) + T^{1,n}_{Q'}(N_1 > Y, N_2 > Y),
\end{equation*}
where each term corresponds to the contribution from the terms with $N_1,N_2$ in the indicated ranges.
For $T^{1,n}_{Q'}(N_1 \leq Y, N_2\leq Y)$, we 
shift the integral lines $\Re(w) = 3$ and $\Re(u) = 3$ to $\Re(w) = \Re(u) = -6$ while passing through a simple pole at $w=0$ and a double pole at $u=0$, and invoke the Residue Theorem to conclude that 
\begin{equation*}
    \begin{aligned}
        T^{1,n}_{Q'}(N_1 \leq Y, N_2\leq Y) & \ll 
        \dsum_{N_1 \leq Y}\dsum_{N_2\leq Y}\log(Y/N_2)\int^{\infty}_{-\infty}\int^{\infty}_{\infty}\frac{1}{(1+|t_1|)^{20}(1+|t_2|)^{20}}\\
        & \times \bigg|\psum_{(k_1,2)=1}T(k_1,1/2+it_1,1/2+it_2,Q')\bigg|dt_2dt_1.
    \end{aligned}
\end{equation*}
For $T^{1,n}_{Q'}(N_1 \leq Y, N_2 > Y)$, we shift the integral lines for $w,u$ to $\Re(w) = -6$ and $\Re(u) = 6$ while passing through a simple pole at $w = 0$, and invoke the Residue Theorem, getting
\begin{equation*}
    \begin{aligned}
        T^{1,n}_{Q'}(N_1 \leq Y, N_2 > Y) & \ll 
        \dsum_{N_1 \leq Y}\dsum_{N_2 > Y}\bigg(\frac{Y}{N_2}\bigg)^6\int^{\infty}_{-\infty}\int^{\infty}_{-\infty}\int^{\infty}_{-\infty}\frac{1}{(1+|t_1|)^{20}(1+|-6+i(t_2-t_4)|)^{20}(1+|6+it_4|)^{20}}\\
        & \times \bigg|\psum_{(k_1,2)=1}T(k_1,1/2+it_1,1/2+it_2,Q')\bigg|dt_4dt_2dt_1.
    \end{aligned}
\end{equation*}
For $T^{1,n}_{Q'}(N_1 > Y, N_2 \leq Y)$, we shift the integral lines for $w,u$ to $\Re(w) = 6$ and $\Re(u) = -6$ while passing through a double pole at $u = 0$, and invoke the Residue Theorem, obtaining
\begin{equation*}
    \begin{aligned}
        & T^{1,n}_{Q'}(N_1 > Y, N_2 \leq Y) \\
        & \ll 
        \dsum_{N_1 > Y}\dsum_{N_2 \leq Y}\bigg(\frac{Y}{N_1}\bigg)^6\log(Y/N_2)\int^{\infty}_{-\infty}\int^{\infty}_{-\infty}\int^{\infty}_{-\infty}\frac{1}{(1+|t_2|)^{20}(1+|-6+i(t_1-t_3)|)^{20}(1+|6+it_3|)^{20}}\\
        & \times \bigg|\psum_{(k_1,2)=1}T(k_1,1/2+it_1,1/2+it_2,Q')\bigg|dt_3dt_2dt_1.
    \end{aligned}
\end{equation*}
Finally, for $T^{1,n}_{Q'}(N_1 > Y, N_2 > Y)$, we shift the integral lines for $w,u$ to $\Re(w)=\Re(u) = 6$ and invoke the Residue Theorem, reaching
\begin{equation*}
    \begin{aligned}
        & T^{1,n}_{Q'}(N_1 > Y, N_2 > Y)\\
        & \ll 
        \dsum_{N_1 > Y}\dsum_{N_2 > Y}\bigg(\frac{Y}{N_1}\bigg)^6\bigg(\frac{Y}{N_2}\bigg)^6\int^{\infty}_{-\infty}\int^{\infty}_{-\infty}\int^{\infty}_{-\infty}\int^{\infty}_{-\infty}\frac{1}{(1+|-6 + i(t_1-t_3)|)^{20}(1+|-6+i(t_2-t_4)|)^{20}}\\
        & \times \frac{1}{(1+|6+it_3|)^{20}(1+|6+it_4|)^{20}}\bigg|\psum_{(k_1,2)=1}T(k_1,1/2+it_1,1/2+it_2,Q')\bigg|dt_4dt_3dt_2dt_1.
    \end{aligned}
\end{equation*}
Now, by (5.9) of \cite{zhou2025momentderivativesquadratictwists}, we see that 
\begin{equation*}
    \begin{aligned}
        T^{1,n}_{Q'}(0) & \ll \frac{a^2b}{X}\bigg\{\dsum_{N_1\leq Y}\dsum_{N_2\leq Y}\log(Y/N_2)\sqrt{N_1N_2} + \dsum_{N_1\leq Y}\dsum_{N_2>Y}\bigg(\frac{Y}{N_2}\bigg)^6\sqrt{N_1N_2} \\
        & + 
        \dsum_{N_1> Y}\dsum_{N_2\leq Y}\bigg(\frac{Y}{N_1}\bigg)^6\log(Y/N_2)\sqrt{N_1N_2} + 
        \dsum_{N_1> Y}\dsum_{N_2> Y}\bigg(\frac{Y}{N_1}\bigg)^6\bigg(\frac{Y}{N_2}\bigg)^6\sqrt{N_1N_2}
        \bigg\} \\
        & \ll \frac{a^2bY\log(Y)}{X} \ll \frac{a^2b}{(\log X)^{99-\epsilon}},
    \end{aligned}
\end{equation*}
as $Y = \frac{X}{(\log X)^{200}}$.
By similar calculation and section 5.4 of \cite{zhou2025momentderivativesquadratictwists}, we have that for general $l \geq 0$:
\begin{equation*}
    T^{1,n}_{Q'}(l) \ll \frac{a^2b}{2^l(\log X)^{99}}.
\end{equation*}
Therefore we conclude that 
\begin{equation}\label{S1nQ'aleqZ}
    S^{1,n}_{Q'}(a\leq Z) \ll \frac{XZ}{(\log X)^{99}}.
\end{equation}
Finally, upon taking $Z = (\log X)^{80}$, we conclude from \eqref{S1Q'a>Z}, \eqref{S1dQ'aleqZ} and \eqref{S1nQ'aleqZ} that 
\[
    S^1_{Q'} = \frac{\check{J}(0)L(1,f\otimes g)L(1, \text{Sym}^2 f)L(1,\text{Sym}^2 g)\mathcal{Z}(0,0;Q')}{2\pi^2}\cdot X\log X + O_{f,g}(X)
\]
where $\epsilon > 0$ could be arbitrarily small.
This completes the proof of Proposition 1. $\square$

\section{Estimating $\mathfrak{I}_2$}
In this section we treat $\mathfrak{I}_2$ and prove Proposition 2.

We recall that 
\[
    \mathfrak{I}_2 = S^2_1 + i^{\kappa_1}\eta_fS^2_{q_1} - i^{\kappa_2}\eta_gS^2_{q_2} - i^{\kappa_1 + \kappa_2}\eta_f\eta_gS^2_{q_1q_2}
\]
where
\[
    S^2_{Q'} := \psum_{\substack{(d,2Q) = 1}}\sum_{m}\sum_{n}\frac{\lambda_f(m)\lambda_g(n)}{\sqrt{mn}}\chi_{8d}(mnQ')W_1\bigg(\frac{m}{Y}\bigg)\bigg(W_2\bigg(\frac{n}{8d\sqrt{q_2}}\bigg) - W_2\bigg(\frac{n}{Y}\bigg)\bigg)J\bigg(\frac{8d}{X}\bigg)
\] and $Q = q_1q_2$.
Using Mobius to drop the square-free condition on $d$, the above becomes
\begin{equation*}
    \begin{aligned}
        S^2_{Q'} & = \sum_{(a,2Q)=1}\mu(a)\sum_{(d,2Q)=1}\mathop{\sum\sum}_{(mn,2a)=1}\frac{\lambda_f(m)\lambda_g(n)\chi_{8d}(mnQ')}{\sqrt{mn}} W_1\bigg(\frac{m}{Y}\bigg)\bigg(W_2\bigg(\frac{n}{8a^2d\sqrt{q_2}}\bigg) - W_2\bigg(\frac{n}{Y}\bigg)\bigg)J\bigg(\frac{8a^2d}{X}\bigg) \\
        & = S^2_{Q'}(a\leq Z) + S^2_{Q'}(a > Z)
    \end{aligned}
\end{equation*}
where as before $Z = (\log X)^{80}$, and $S^2_{Q'}(a\leq Z)$ and $S^2_{Q'}(a > Z)$ denote respectively the contribution from the terms with $a \leq Z$ and $a > Z$ in the above. 

By Proposition 3.1 of \cite{Kumar.etc-MR4765788} and Proposition 6.2 of \cite{Li-MR4768632} as well as Cauchy-Schwarz, we may bound $S^2_{Q'}(a > Z)$ by 
\begin{equation}\label{S2Q'a>Z}
    S^2_{Q'}(a > Z) \ll \frac{X(\log X)^{3/2+\epsilon}}{Z^{1-\epsilon}}.
\end{equation}

Now we treat $S^2_{Q'}(a \leq Z)$. We drop the co-prime condition on the $d$-summation and write the above as
\begin{equation*}
    \sum_{\substack{(a,2Q)=1 \\ a \leq Z}}\mu(a)\sum_{b|Q}\mu(b)\sum_{(d,2)=1}\mathop{\sum\sum}_{(mn,2a)=1}\frac{\lambda_f(m)\lambda_g(n)\chi_{8bd}(mnQ')}{\sqrt{mn}} W_1\bigg(\frac{m}{Y}\bigg)\bigg(W_2\bigg(\frac{n}{8a^2bd\sqrt{q_2}}\bigg) - W_2\bigg(\frac{n}{Y}\bigg)\bigg)J\bigg(\frac{8a^2bd}{X}\bigg)
\end{equation*}
Applying Poisson summation, we transform $S^2_{Q'}(a\leq Z)$ as
\begin{equation*}
       \begin{aligned}
        S^{2}_{Q'}(a \leq Z) & =
        \frac{X}{16}\sum_{\substack{(a,2Q)=1 \\ a \leq Z}}\frac{\mu(a)}{a^2}\sum_{b|Q}\frac{\mu(b)}{b}(2\pi i)^{-2}\int_{(3)}\int_{(3)}\frac{\gamma_1(w)}{w}\frac{\gamma_2(u)}{u^2}(2\pi)^{-(w+u)}Y^w\\
        & \times \mathop{\sum\sum}_{(mn,2a)=1}\frac{\lambda_f(m)\lambda_g(n)\chi_b(mnQ')}{m^{1/2+w}n^{1/2+u}}\sum_{k\in \mathbb{Z}}(-1)^k\frac{G_k(mnQ')}{mnQ'}\check{H}_u\bigg(\frac{kX}{16a^2bmnQ'}\bigg)dudw
    \end{aligned}
\end{equation*}
in which $H_u(s) := J(s)\cdot \big((s\sqrt{q_2}X)^u - Y^u\big)$.
As before, we denote the contribution from the term with $k=0$ and those with $k \neq 0$ respectively by $S^{2,d}_{Q'}(a\leq Z)$ and $S^{2,n}_{Q'}(a\leq Z)$, and treat them separately. 

For the diagonal term $S^{2,d}_{Q'}(a\leq Z)$, we use \eqref{asymptotics for Mobius} and proceed similarly as before to write it as 
\begin{equation*}
    \begin{aligned}
         S^{2,d}_{Q'}(a\leq Z) & = \frac{X}{16}\sum_{\substack{(a,2Q)=1 \\ a \leq Z}}\frac{\mu(a)}{a^2}\sum_{b|Q}\frac{\mu(b)}{b}(2\pi i)^{-2}\int_{(3)}\int_{(3)}\frac{\gamma_1(w)}{w}\frac{\gamma_2(u)}{u^2}(2\pi)^{-(w+u)}Y^w\\
         & \times \mathop{\sum\sum}_{\substack{(mn,2a)=1 \\ mnQ' = \square}}\frac{\lambda_f(m)\lambda_g(n)}{m^{1/2+w}n^{1/2+u}}\frac{\phi(mnQ')}{mnQ'}\check{H}_u(0)dudw \\
         & = \frac{X}{2\pi^2}(2\pi i)^{-2}\int_{-\infty}^{\infty}J(x)\int_{(3)}\int_{(3)}\frac{\gamma_1(w)}{w}\frac{\gamma_2(u)}{u^2}\big((x\sqrt{q_2}X/Y)^u-1\big)\bigg(\frac{Y}{2\pi}\bigg)^{u+w}\\
         & \times L(1+w+u,f\otimes g)         L(1+2w,\text{Sym}^2 f)L(1+2u,\text{Sym}^2g)\mathcal{Z}(w,u;Q')dudwdx + O(X(\log X)^{63}/Z).
    \end{aligned}
\end{equation*}
Noticing the simple poles of the integrand at $w = 0$ and $u=0$, we shift the integral lines for $w,u$ to $\Re(w) = \Re(u) = -\frac{1}{5}$ and invoke the Residue Theorem to conclude that 
\begin{equation}\label{S2dQ'aleqZ}
    S^{2,d}_{Q'}(a \leq Z) \ll X(
    \log \log X) + X(\log X)^{63}/Z.
\end{equation}

Next we estimate the off-diagonal term $S^{2,n}_{Q'}(a\leq Z)$. Similar as in the previous section, the off-diagonal term could be rewritten as
\[
    S^{2,n}_{Q'}(a\leq Z) = \frac{X}{16}\sum_{\substack{(a,2Q)=1 \\ a\leq Z}}\frac{\mu(a)}{a^2}\sum_{b|Q}\frac{\mu(b)}{b}\bigg\{ 
    \sum_{l\geq 1}T^{2,n}_{Q'}(l) - T^{2,n}_{Q'}(0)
    \bigg\},
\]
in which for $l \geq 0$ 
\begin{equation*}
    \begin{aligned}
         T^{2,n}_{Q'}(l) &:= \psum_{(k_1,2)=1}\sum_{\substack{(k_2,2)=1\\ k_2 \geq 1}}\mathop{\sum\sum}_{ (mn,2a)=1}(2\pi i)^{-2}\int_{(3)}\int_{(3)}\frac{\gamma_1(w)}{w}\frac{\gamma_2(u)}{u^2}(2\pi) ^{-(w+u)}Y^w\frac{\lambda_f(m)\lambda_g(n)\chi_b(mnQ')}{m^{1/2+w}n^{1/2+u}}\\
      & \times \frac{G_{2^{\delta(l)}k_1k_2^2}(mnQ')}{mnQ'}\check{H}_u\bigg(\frac{2^lk_1k_2^2X}{16a^2bmnQ'}\bigg)dudw.
    \end{aligned}
\end{equation*}
We shall work in full detail only for $T^{2,n}_{Q'}(0)$. As before, we dyadically partition $m,n$-summations using $G(\cdot)$'s, then insert $V(\cdot)$'s and invoke the Mellin inversion for $G(\cdot)$'s to arrive at 
\begin{equation*}
    \begin{aligned}
        T^{2,n}_{Q'}(0) & = \dsum_{N_1}\dsum_{N_2}\psum_{(k_1,2)=1}\sum_{\substack{(k_2,2)=1 \\ k_2 \geq 1}}\mathop{\sum\sum}_{(mn,2a)=1}(2\pi i)^{-5}\int_{(1/2)}\int^{\infty}_0J(x)x^{-s}\int_{(3)}\int_{(3)}\int_{(0)}\int_{(0)}\frac{\gamma_1(w)}{w}\frac{\gamma_2(u)}{u}(2\pi)^{-(w+u)}\\
        & \times \bigg(\frac{Y}{N_1}\bigg)^{w}\bigg(\frac{Y}{N_2}\bigg)^{u}\frac{\big((x\sqrt{q_2}X/Y)^u - 1\big)}{u} \frac{\lambda_f(m)\lambda_g(n)\chi_b(mnQ')}{m^{1/2+v_1}n^{1/2+v_2}}\cdot \frac{G_{k_1k_2^2}(mnQ')}{mnQ'}V(m/N_1)V(n/N_2)\\
        & \times \Gamma(s)(\cos(\pi s/2) + \text{sgn}(k_1)\sin(\pi s/2))\bigg(\frac{8a^2bmnQ'}{\pi |k_1|k_2^2X}\bigg)^s\tilde{G}(v_1-w)\tilde{G}(v_2-u)N_1^{v_1}N_2^{v_2}dv_2dv_1dudwdxds \\
        & = T^{2,n}_{Q'}(N_1 \leq Y, N_2\leq Y) +
        T^{2,n}_{Q'}(N_1 \leq Y, Y< N_2 \leq X) +
        T^{2,n}_{Q'}(N_1\leq Y, X< N_2 )\\
        & + 
        T^{2,n}_{Q'}(Y< N_1, N_2 \leq Y)
         + T^{2,n}_{Q'}(Y<N_1, Y < N_2 \leq X) + T^{2,n}_{Q'}(Y < N_1, X < N_2),
    \end{aligned}
\end{equation*}
where the last six terms correspond to the contribution to $T^{2,n}_{Q'}(0)$ from those terms with the corresponding ranges for $N_1,N_2$. 

For $T^{2,n}_{Q'}(N_1 \leq Y, N_2\leq Y)$, we shift the integral lines for $w,u$ to $\Re(w) = \Re(u) = -6$ passing through simple poles at $w = 0$ and $u=0$ ,and then apply the Residue Theorem to conclude that 
\begin{equation*}
    \begin{aligned}
        T^{2,n}_{Q'}(N_1 \leq Y, N_2\leq Y) & \ll (\log \log X)\dsum_{N_1\leq Y}\dsum_{N_2\leq Y}\int_{-\infty}^{\infty}\int^{\infty}_{-\infty}\frac{1}{(1+|t_1|)^{20}(1+|t_2|)^{20}}\\
        & \times \bigg|\psum_{(k_1,2)=1}T(k_1,1/2+it_1,1/2+it_2,Q')\bigg|dt_2dt_1.
    \end{aligned}
\end{equation*}

For $T^{2,n}_{Q'}(Y<N_1 , N_2\leq Y)$, we shift the integral lines for $w,u$ to $\Re(u) = -6$ and $\Re(w) = 6$, by the Residue Theorem we have 
\begin{equation*}
    \begin{aligned}
        & T^{2,n}_{Q'}(Y<N_1, N_2 \leq Y)\\ 
        & \ll (\log \log X)\mathop{\dsum\dsum}_{\substack{N_1> Y \\ N_2 \leq Y}}(Y/N_1)^{6}\int_{-\infty}^{\infty}\int_{-\infty}^{\infty}\int_{-\infty}^{\infty}\frac{1}{(1+|t_2|)^{20}(1+|6 + it_3|)^{20}}\\
        & \times \frac{1}{(1+|-6 + i(t_1-t_3)|)^{20}} \bigg|\psum_{(k_1,2)=1}T(k_1,1/2+it_1,1/2+it_2,Q')\bigg|dt_4dt_2dt_1.
    \end{aligned}
\end{equation*}

To estimate the other four terms, we firstly follow (6.1) of \cite{zhou2025momentderivativesquadratictwists} to define the following "shifted" version of $T(k_1,1/2 + it_1,1/2+it_2,Q')$ as
\begin{equation*}
    \begin{aligned}
        & T(k_1,1/2+it_1, 1/2+it_2,Q', c + it_4)\\  
        & := \frac{1}{2\pi i}\int_{(1/2)}\Gamma(s)(\cos(\pi s/2) + \text{sgn}(k_1)\sin(\pi s/2))\bigg(\frac{8a^2bQ'}{\pi|k_1|X}\bigg)^s\\
        & \times \sum_{\substack{(k_2,2)=1 \\ k_2\geq 1}}\mathop{\sum\sum}_{(mn,2a)=1}\frac{\lambda_f(m)\lambda_g(n)\chi_b(mnQ')}{m^{1/2+v_1-s}n^{1/2+v_2-s}k_2^{2s}}\cdot \frac{G_{k_1k_2^2}(mnQ')}{mnQ'}V(m/N_1)V(n/N_2)
    \end{aligned}
\end{equation*}
for $c,t_1,t_2,t_4 \in \mathbb{R}$.

Now for $T^{2,n}_{Q'}(N_1 \leq Y, Y < N_2\leq X)$, we shift the integral lines for $w,u$ to $\Re(w) = -6$ and $\Re(u) = 1/\log X$, by the Residue Theorem we have 
\begin{equation*}
    \begin{aligned}
        & T^{2,n}_{Q'}(N_1\leq Y, Y < N_2 \leq X)\\ 
        & \ll (\log \log X)^2\mathop{\dsum\dsum}_{\substack{N_1\leq Y \\ Y < N_2 \leq X}}(Y/N_2)^{(\log X)^{-1}}\int_{-\infty}^{\infty}\int_{-\infty}^{\infty}\int_{-\infty}^{\infty}\frac{1}{(1+|t_1|)^{20}(1+|(\log X)^{-1} + it_4|)^{20}}\\
        & \times \frac{1}{(1+|-(\log X)^{-1} + i(t_2-t_4)|)^{20}} \bigg|\psum_{(k_1,2)=1}T(k_1,1/2+it_1,1/2+it_2,Q',(\log X)^{-1}+it_4)\bigg|dt_4dt_2dt_1.
    \end{aligned}
\end{equation*}
In the above we have used the fact that $\frac{(xX/Y)^u - 1}{u^2}\ll \log(X/Y)^2 \ll (\log \log X)^2$.

For $T^{2,n}_{Q'}(N_1 \leq Y, X < N_2)$, we shift the integral lines for $w,u$ to $\Re(w) = -6$ and $\Re(u) = 6$, by the Residue Theorem we have 
\begin{equation*}
    \begin{aligned}
        & T^{2,n}_{Q'}(N_1\leq Y, X < N_2 )\\ 
        & \ll \mathop{\dsum\dsum}_{\substack{N_1\leq Y \\ X < N_2}}(Y/N_2)^6\int_{-\infty}^{\infty}\int_{-\infty}^{\infty}\int_{-\infty}^{\infty}\frac{1}{(1+|t_1|)^{20}(1+|6 + it_4|)^{20}(1+|-6 + i(t_2-t_4)|)^{20}}\\
        & \times \bigg|\psum_{(k_1,2)=1}T(k_1,1/2+it_1,1/2+it_2,Q',6+it_4)\bigg|dt_4dt_2dt_1.
    \end{aligned}
\end{equation*}

For $T^{2,n}_{Q'}(Y<N_1 , Y< N_2\leq X)$, we shift the integral lines for $w,u$ to $\Re(u) = (\log X)^{-1}$ and $\Re(w) = 6$, by the Residue Theorem we have 
\begin{equation*}
    \begin{aligned}
        & T^{2,n}_{Q'}(Y<N_1, Y< N_2 \leq X)\\ 
        & \ll (\log \log X)^2\mathop{\dsum\dsum}_{\substack{N_1> Y \\ Y < N_2 \leq X}}(Y/N_1)^{6}(Y/N_2)^{(\log X)^{-1}}\int^{\infty}_{\infty}\int_{-\infty}^{\infty}\int_{-\infty}^{\infty}\int_{-\infty}^{\infty}\frac{1}{(1+|(\log X)^{-1}+it_4|)^{20}(1+|6 + it_3|)^{20}}\\
        & \times \frac{1}{(1+|-6 + i(t_1-t_3)|)^{20}}\frac{1}{(1+|-(\log X)^{-1} + i(t_2-t_4)|)^{20}}\\
        &\times \bigg|\psum_{(k_1,2)=1}T(k_1,1/2+it_1,1/2+it_2,Q', (\log X)^{-1} + it_4)\bigg|dt_4dt_3dt_2dt_1.
    \end{aligned}
\end{equation*}

Finally, for $T^{2,n}_{Q'}(Y<N_1 , X < N_2)$, we shift the integral lines for $w,u$ to $\Re(u) = 6$ and $\Re(w) = 6$, by the Residue Theorem we have 
\begin{equation*}
    \begin{aligned}
        & T^{2,n}_{Q'}(Y<N_1, X< N_2)\\ 
        & \ll \mathop{\dsum\dsum}_{\substack{N_1> Y \\ N_2 > X }}(Y/N_1)^{6}(Y/N_2)^{6}\int^{\infty}_{\infty}\int_{-\infty}^{\infty}\int_{-\infty}^{\infty}\int_{-\infty}^{\infty}\frac{1}{(1+|6+it_4|)^{20}(1+|6 + it_3|)^{20}}\\
        & \times \frac{1}{(1+|-6 + i(t_1-t_3)|)^{20}}\frac{1}{(1+|-6 + i(t_2-t_4)|)^{20}}\\
        &\times \bigg|\psum_{(k_1,2)=1}T(k_1,1/2+it_1,1/2+it_2,Q', 6 + it_4)\bigg|dt_4dt_3dt_2dt_1.
    \end{aligned}
\end{equation*}

Now, by (5.9) of \cite{zhou2025momentderivativesquadratictwists} we conclude that 
\begin{equation*}
    \begin{aligned}
    T^{2,n}_{Q'}(0) & \ll \frac{a^2b}{X}\bigg\{
    (\log \log X)\mathop{\dsum\dsum}_{\substack{N_1 \leq Y \\ N_2\leq Y}}\sqrt{N_1N_2} 
    + (\log \log X)^2\mathop{\dsum\dsum}_{\substack{N_1 \leq Y \\ Y < N_2\leq X}}(Y/N_2)^{(\log X)^{-1}}\sqrt{N_1N_2} \\
    & + \mathop{\dsum\dsum}_{\substack{N_1 \leq Y \\ X<N_2}}(Y/N_2)^6\sqrt{N_1N_2} 
    + (\log \log X)\mathop{\dsum\dsum}_{\substack{N_1 > Y \\ N_2\leq Y}}(Y/N_1)^6\sqrt{N_1N_2}\\
    & +(\log \log X)^2\mathop{\dsum\dsum}_{\substack{N_1 > Y \\ Y < N_2\leq X}}(Y/N_1)^6(Y/N_1)^{(\log X)^{-1}}\sqrt{N_1N_2}
    + \mathop{\dsum\dsum}_{\substack{N_1 > Y \\ N_2>X}}(Y/N_1)^6(Y/N_2)^6\sqrt{N_1N_2}
    \bigg\} \\
    & \ll \frac{a^2b}{X}\sqrt{YX}(\log \log X)^3 \ll \frac{a^2b}{(\log X)^{99-\epsilon}}.
    \end{aligned}
\end{equation*}
By a similar process one can also get for general $l \geq 0$
\[
    T^{2,n}_{Q'}(l) \ll \frac{a^2b}{2^l (\log X)^{99-\epsilon}},
\]
and thus 
\begin{equation}\label{S2nQ'aleqZ}
    S^{2,n}_{Q'}(a\leq Z) \ll \frac{XZ}{(\log X)^{99-\epsilon}} \ll X/(\log X)^{10},
\end{equation}
with $Z = (\log X)^{80}$.

Thus, combining \eqref{S2Q'a>Z}, \eqref{S2dQ'aleqZ} and \eqref{S1nQ'aleqZ}, we conclude that 
\[
    S^{2}_{Q'} \ll X(\log \log X),
\]
finishing the proof of Proposition 2. $\square$

\section{Estimating $\mathfrak{I}_3$}
In this section we treat $\mathfrak{I}_3$ and prove Proposition 3.

Due to the structural similarity of $\mathfrak{I}_3$ and $\mathfrak{I}_2$ treated in the previous section, we shall be concise and only provide the sketch of the proof. The notations in this section shall follow a same fashion as those in the previous two sections.

As before, we define
\[
    S^3_{Q'} := \psum_{\substack{(d,2Q) = 1}}\sum_{m}\sum_{n}\frac{\lambda_f(m)\lambda_g(n)}{\sqrt{mn}}\chi_{8d}(mnQ')\bigg(W_1\bigg(\frac{m}{8d\sqrt{q_1}}\bigg) - W_1\bigg(\frac{m}{Y}\bigg)\bigg)W_2\bigg(\frac{n}{Y}\bigg)J\bigg(\frac{8d}{X}\bigg)
\] and $Q = q_1q_2$,
thus 
\[
    \mathfrak{I}_3 = S^3_{1} + i^{\kappa_1}\eta_fS^3_{q_1} - i^{\kappa_2}\eta_gS^3_{q_2} - i^{\kappa_1 + \kappa_2}\eta_f\eta_gS^3_{q_1q_2}.
\]

As before, we drop the square-free condition as
\begin{equation*}
    \begin{aligned}
        S^3_{Q'} & = \sum_{(a,2Q)=1}\mu(a)\sum_{(d,2Q)=1}\mathop{\sum\sum}_{(mn,2a)=1}\frac{\lambda_f(m)\lambda_g(n)\chi_{8d}(mnQ')}{\sqrt{mn}} W_2\bigg(\frac{n}{Y}\bigg)\bigg(W_1\bigg(\frac{m}{8a^2d\sqrt{q_1}}\bigg) - W_1\bigg(\frac{m}{Y}\bigg)\bigg)J\bigg(\frac{8a^2d}{X}\bigg) \\
        & = S^3_{Q'}(a\leq Z) + S^3_{Q'}(a > Z)
    \end{aligned}
\end{equation*}
where $S^3_{Q'}(a\leq Z)$ and $S^3_{Q'}(a > Z)$ are the contribution from the terms with $a \leq Z$ and $a > Z$ respectively. For $S^3_{Q'}(a >Z)$, we use Proposition 6.1 of \cite{Li-MR4768632} and Proposition 3.2 of \cite{Kumar.etc-MR4765788} as well as Cauchy-Schwarz to bound it as
\begin{equation}\label{S3Q'a>Z}
    S^3_{Q'}(a > Z) \ll \frac{X(\log X)^{\frac{3}{2}+\epsilon}}{Z^{1-\epsilon}}. 
\end{equation}

Then, as before for $S^3_{Q'}(a \leq Z)$ we apply Poisson summation to transform it as
\begin{equation*}
    \begin{aligned}
        S^3_{Q'}(a\leq Z) & = \frac{X}{16}\sum_{\substack{(a,2Q)=1 \\ a \leq Z}}\frac{\mu(a)}{a^2}\sum_{b|Q}\frac{\mu(b)}{b}(2\pi i)^{-2}\int_{(3)}\int_{(3)}\frac{\gamma_1(w)}{w}\frac{\gamma_2(u)}{u^2}(2\pi)^{-(w+u)}Y^u\\
        & \times \mathop{\sum\sum}_{(mn,2a)=1}\frac{\lambda_f(m)\lambda_g(n)\chi_b(mnQ')}{m^{1/2+w}n^{1/2+u}}\sum_{k\in \mathbb{Z}}(-1)^k\frac{G_k(mnQ')}{mnQ'}\check{H}_w\bigg(\frac{kX}{16a^2bmnQ'}\bigg)dudw
    \end{aligned}
\end{equation*}
where $H_w(x):= J(s)\big((s\sqrt{q_1}X)^w - Y^w\big)$.
Now for the diagonal term $S^{3,d}_{Q'}(a\leq Z)$, which correspond the contribution from the $k=0$ terms in $S^{3}_{Q'}(a\leq Z)$, we as before use \eqref{asymptotics for Mobius} and estimate it as
\begin{equation}\label{S3dQ'aleqZ}
    \begin{aligned}
         S^{3,d}_{Q'}(a\leq Z) & 
          = \frac{X}{2\pi^2}(2\pi i)^{-2}\int_{-\infty}^{\infty}J(x)\int_{(3)}\int_{(3)}\frac{\gamma_1(w)}{w}\frac{\gamma_2(u)}{u^2}\big((x\sqrt{q_1}X/Y)^w-1\big)\bigg(\frac{Y}{2\pi}\bigg)^{u+w}\\
         & \times L(1+w+u,f\otimes g)         L(1+2w,\text{Sym}^2 f)L(1+2u,\text{Sym}^2g)\mathcal{Z}(w,u;Q')dudwdx \\
         & + O(X(\log X)^{63}/Z)\\
         & \ll XY^{-\frac{1}{5}}\log Y + X(\log X)^{63}/Z.
    \end{aligned}
\end{equation}
after shifting the integral lines to $\Re(w) = \Re(u) = -\frac{1}{5}$ while passing through a double pole only at $u = 0$.

As for the off-diagonal term $S^{3,n}_{Q'}(a\leq Z) := S^{3}_{Q'}(a\leq Z) - S^{3,d}_{Q'}(a\leq Z)$, we proceed as before to write it as
\begin{equation*}
    S^{3,n}_{Q'}(a\leq Z) = \frac{X}{16}\cdot \sum_{\substack{(a,2Q)=1 \\ a\leq Z}}\frac{\mu(a)}{a^2}\sum_{b|Q}\frac{\mu(b)}{b}\bigg\{
    \sum_{l\geq 1}T^{3,n}_{Q'}(l) - T^{3,n}_{Q'}(0)
    \bigg\}
\end{equation*}
in which for $l \geq 0$ 
\begin{equation*}
    \begin{aligned}
         T^{3,n}_{Q'}(l) &:= \psum_{(k_1,2)=1}\sum_{\substack{(k_2,2)=1\\ k_2 \geq 1}}\mathop{\sum\sum}_{ (mn,2a)=1}(2\pi i)^{-2}\int_{(3)}\int_{(3)}\frac{\gamma_1(w)}{w}\frac{\gamma_2(u)}{u^2}(2\pi) ^{-(w+u)}Y^u\frac{\lambda_f(m)\lambda_g(n)\chi_b(mnQ')}{m^{1/2+w}n^{1/2+u}}\\
      & \times \frac{G_{2^{\delta(l)}k_1k_2^2}(mnQ')}{mnQ'}\check{H}_w\bigg(\frac{2^lk_1k_2^2X}{16a^2bmnQ'}\bigg)dudw.
    \end{aligned}
\end{equation*}

As before we focus on estimating $T^{3,n}_{Q'}(0)$.
Dyadically partition $m,n$-summations using $G(\cdot)$'s then inserting $V(\cdot)$'s and invoking the Mellin inversion for $G(\cdot)$'s, we arrive at 
\begin{equation*}
    \begin{aligned}
        T^{3,n}_{Q'}(0) & = \dsum_{N_1}\dsum_{N_2}\psum_{(k_1,2)=1}\sum_{\substack{(k_2,2)=1 \\ k_2 \geq 1}}\mathop{\sum\sum}_{(mn,2a)=1}(2\pi i)^{-5}\int_{(1/2)}\int^{\infty}_0J(x)x^{-s}\int_{(3)}\int_{(3)}\int_{(0)}\int_{(0)}\gamma_1(w)\frac{\gamma_2(u)}{u^2}(2\pi)^{-(w+u)}\\
        & \times \bigg(\frac{Y}{N_1}\bigg)^{w}\bigg(\frac{Y}{N_2}\bigg)^{u}\frac{\big((x\sqrt{q_1}X/Y)^w - 1\big)}{w} \frac{\lambda_f(m)\lambda_g(n)\chi_b(mnQ')}{m^{1/2+v_1}n^{1/2+v_2}}\cdot \frac{G_{k_1k_2^2}(mnQ')}{mnQ'}V(m/N_1)V(n/N_2)\\
        & \times \Gamma(s)(\cos(\pi s/2) + \text{sgn}(k_1)\sin(\pi s/2))\bigg(\frac{8a^2bmnQ'}{\pi |k_1|k_2^2X}\bigg)^s\tilde{G}(v_1-w)\tilde{G}(v_2-u)N_1^{v_1}N_2^{v_2}dv_2dv_1dudwdxds \\
        & = T^{3,n}_{Q'}(N_1 \leq Y, N_2\leq Y) +
        T^{3,n}_{Q'}(Y < N_1 \leq X, N_2 \leq Y) +
        T^{3,n}_{Q'}(N_1> X, N_2 \leq Y)\\
        & + 
        T^{3,n}_{Q'}(N_1\leq Y, N_2 > Y)
         + T^{3,n}_{Q'}(Y<N_1 \leq X, N_2 > Y) + T^{3,n}_{Q'}(X < N_1, N_2 > Y),
    \end{aligned}
\end{equation*}
where the last six terms correspond to the contribution to $T^{3,n}_{Q'}(0)$ from those terms with the corresponding ranges for $N_1,N_2$. 

For $T^{3,n}_{Q'}(N_1 \leq Y, N_2\leq Y)$, we shift the integral lines for $w,u$ to $\Re(w) = \Re(u) = -6$ passing through a double pole at $u=0$ ,and then apply the Residue Theorem to conclude that 
\begin{equation*}
    \begin{aligned}
        T^{3,n}_{Q'}(N_1 \leq Y, N_2\leq Y) & \ll \dsum_{N_1\leq Y}\dsum_{N_2\leq Y}\log(Y/N_2)\int^{\infty}_{-\infty}\int_{-\infty}^{\infty}\int^{\infty}_{-\infty}\frac{1}{(1+|6+ i(t_1-t_3)|)^{20}(1+|t_2|)^{20}}\\
        & \times \frac{1}{(1+|-6+it_3|)^{20}}\bigg|\psum_{(k_1,2)=1}T(k_1,1/2+it_1,1/2+it_2,Q',-6+it_3)\bigg|dt_3dt_2dt_1.
    \end{aligned}
\end{equation*}

For $T^{3,n}_{Q'}(Y < N_1 \leq X, N_2\leq Y)$, we shift the integral lines for $w,u$ to $\Re(w) = (\log X)^{-1}$ and $\Re(u) = -6$, by the Residue Theorem we have 
\begin{equation*}
    \begin{aligned}
        & T^{3,n}_{Q'}(Y<N_1\leq X, N_2 \leq Y)\\ 
        & \ll \mathop{\dsum\dsum}_{\substack{Y < N_1 \leq X \\ N_2 \leq Y}}(Y/N_1)^{(\log X)^{-1}}\log(Y/N_2)\int_{-\infty}^{\infty}\int_{-\infty}^{\infty}\int_{-\infty}^{\infty}\frac{1}{(1+|t_2|)^{20}(1+|(\log X)^{-1} + it_3|)^{20}}\\
        & \times \frac{1}{(1+|-(\log X)^{-1} + i(t_1-t_3)|)^{20}} \bigg|\psum_{(k_1,2)=1}T(k_1,1/2+it_1,1/2+it_2,Q', (\log X)^{-1} + it_3)\bigg|dt_3dt_2dt_1.
    \end{aligned}
\end{equation*}

For $T^{3,n}_{Q'}(N_1>X, N_2\leq Y)$, we shift the integral lines for $w,u$ to $\Re(w) = 6$ and $\Re(u) = -6$, by the Residue Theorem we have 
\begin{equation*}
    \begin{aligned}
        & T^{3,n}_{Q'}(N_1\leq Y, Y < N_2 \leq X)\\ 
        & \ll \mathop{\dsum\dsum}_{\substack{N_1> X \\ N_2 \leq Y}}(Y/N_1)^{6}\log(Y/N_1) \int_{-\infty}^{\infty}\int_{-\infty}^{\infty}\int_{-\infty}^{\infty}\frac{1}{(1+|t_2|)^{20}(1+|6 + it_3|)^{20}}\\
        & \times \frac{1}{(1+|-6 + i(t_1-t_3)|)^{20}} \bigg|\psum_{(k_1,2)=1}T(k_1,1/2+it_1,1/2+it_2,Q', 6+it_3)\bigg|dt_3dt_2dt_1.
    \end{aligned}
\end{equation*}

For $T^{3,n}_{Q'}(N_1 \leq Y, Y < N_2)$, we shift the integral lines for $w,u$ to $\Re(w) = -6$ and $\Re(u) = 6$, by the Residue Theorem we have 
\begin{equation*}
    \begin{aligned}
        & T^{3,n}_{Q'}(N_1\leq Y, Y < N_2 )\\ 
        & \ll \mathop{\dsum\dsum}_{\substack{N_1\leq Y \\ Y < N_2}}(Y/N_2)^6\int_{-\infty}^{\infty}\int_{-\infty}^{\infty}\int_{-\infty}^{\infty}\frac{1}{(1+|-6+it_3|)^{20}(1+|6 + it_4|)^{20}(1+|-6 + i(t_2-t_4)|)^{20}}\\
        & \times \frac{1}{(1+|6+i(t_1-t_3)|)^{20}}\bigg|\psum_{(k_1,2)=1}T(k_1,1/2+it_1,1/2+it_2,Q',-6+it_3)\bigg|dt_4dt_3dt_2dt_1.
    \end{aligned}
\end{equation*}

For $T^{3,n}_{Q'}(Y<N_1 \leq X , Y< N_2)$, we shift the integral lines for $w,u$ to $\Re(u) = 6$ and $\Re(w) = (\log X)^{-1}$, by the Residue Theorem we have 
\begin{equation*}
    \begin{aligned}
        & T^{3,n}_{Q'}(Y<N_1\leq X, Y< N_2)\\ 
        & \ll \mathop{\dsum\dsum}_{\substack{N_1> Y \\ Y < N_2 \leq X}}(Y/N_2)^{6}(Y/N_1)^{(\log X)^{-1}}\int^{\infty}_{\infty}\int_{-\infty}^{\infty}\int_{-\infty}^{\infty}\int_{-\infty}^{\infty}\frac{1}{(1+|(\log X)^{-1}+it_3|)^{20}(1+|6 + it_4|)^{20}}\\
        & \times \frac{1}{(1+|-6 + i(t_2-t_4)|)^{20}}\frac{1}{(1+|-(\log X)^{-1} + i(t_1-t_3)|)^{20}}\\
        &\times \bigg|\psum_{(k_1,2)=1}T(k_1,1/2+it_1,1/2+it_2,Q', (\log X)^{-1} + it_3)\bigg|dt_4dt_3dt_2dt_1.
    \end{aligned}
\end{equation*}

Finally, for $T^{3,n}_{Q'}(X<N_1 , Y < N_2)$, we shift the integral lines for $w,u$ to $\Re(u) = 6$ and $\Re(w) = 6$, by the Residue Theorem we have 
\begin{equation*}
    \begin{aligned}
        & T^{3,n}_{Q'}(X<N_1, Y< N_2)\\ 
        & \ll \mathop{\dsum\dsum}_{\substack{N_1> X \\ N_2 > Y }}(Y/N_1)^{6}(Y/N_2)^{6}\int^{\infty}_{\infty}\int_{-\infty}^{\infty}\int_{-\infty}^{\infty}\int_{-\infty}^{\infty}\frac{1}{(1+|6+it_4|)^{20}(1+|6 + it_3|)^{20}}\\
        & \times \frac{1}{(1+|-6 + i(t_1-t_3)|)^{20}}\frac{1}{(1+|-6 + i(t_2-t_4)|)^{20}}\\
        &\times \bigg|\psum_{(k_1,2)=1}T(k_1,1/2+it_1,1/2+it_2,Q', 6 + it_3)\bigg|dt_4dt_3dt_2dt_1.
    \end{aligned}
\end{equation*}

Now, by (5.9) of \cite{zhou2025momentderivativesquadratictwists} we conclude that 
\begin{equation*}
    \begin{aligned}
    T^{3,n}_{Q'}(0) & \ll \frac{a^2b}{X}\bigg\{
    \log Y\mathop{\dsum\dsum}_{\substack{N_1 \leq Y \\ N_2\leq Y}}\sqrt{N_1N_2} 
    + \log Y\mathop{\dsum\dsum}_{\substack{Y<N_1 \leq X \\ N_2\leq Y}}(Y/N_1)^{(\log X)^{-1}}\sqrt{N_1N_2} \\
    & + \log Y\mathop{\dsum\dsum}_{\substack{N_1 > X \\ N_2\leq Y}}(Y/N_1)^6\sqrt{N_1N_2} 
    + \mathop{\dsum\dsum}_{\substack{N_1 \leq Y \\ Y<N_2}}(Y/N_2)^6\sqrt{N_1N_2}\\
    & +\mathop{\dsum\dsum}_{\substack{Y < N_1 \leq X \\ Y < N_2}}(Y/N_2)^6(Y/N_1)^{(\log X)^{-1}}\sqrt{N_1N_2}
    + \mathop{\dsum\dsum}_{\substack{N_1 > Y \\ N_2>X}}(Y/N_1)^6(Y/N_2)^6\sqrt{N_1N_2}
    \bigg\} \\
    & \ll \frac{a^2b}{(\log X)^{99-\epsilon}},
    \end{aligned}
\end{equation*}
upon choosing $Z = (\log X)^{80}$.

Similarly we may show that 
\[
    T^{3,n}_{Q'}(l) \ll \frac{a^2b}{2^l (\log X)^{99-\epsilon}}
\]
for $l \geq 0$,
and consequently 
\begin{equation}\label{S3nQ'aleqZ}
    S^{3,n}_{Q'}(a\leq Z) \ll \frac{X}{(\log X)^{10}}.
\end{equation}

Thus combining \eqref{S3Q'a>Z}, \eqref{S3dQ'aleqZ} and \eqref{S3nQ'aleqZ}, we conclude that 
\[
    S^{3}_{Q'} \ll X,
\]
completing the proof of Proposition 3. $\square$

\section{Estimating $\mathfrak{I}_4$}
In this section we treat $\mathfrak{I}_4$ and prove Proposition 4.

As before, we first write $\mathfrak{I}_4$ as a sum of four terms
\[
    \mathfrak{I}_4 = S^4_1 + i^{\kappa_1}\eta_fS^4_{q_1} - i^{\kappa_2}\eta_gS^4_{q_2} - i^{\kappa_1 + \kappa_2}\eta_f\eta_gS^4_{q_1q_2},
\]
where 
\begin{align*}
    S^4_{Q'} & := \psum_{(d,2Q)=1}\sum_{m}\sum_{n}\frac{\lambda_f(m)\lambda_g(n)}{\sqrt{mn}}\chi_{8d}(mnQ')J\bigg(\frac{8d}{X}\bigg) \\
    & \times \bigg\{W_1\bigg(\frac{m}{8|d|\sqrt{q_1}}\bigg) - W_1\bigg(\frac{m}{Y}\bigg)\bigg\}\cdot \bigg\{W_2\bigg(\frac{m}{8|d|\sqrt{q_1}}\bigg) - W_2\bigg(\frac{n}{Y}\bigg)\bigg\}.
\end{align*}

Using Mobius, we as before drop the coprime and square-free conditions:
\begin{align*}
    S^4_{Q'} & = \sum_{(a,2Q)=1}\mu(a)\sum_{b|Q}\mu(b)\sum_{m}\sum_{n}\frac{\lambda_f(m)\lambda_g(n)}{\sqrt{mn}}\sum_{(d,2)=1}\chi_{8bd}(mnQ')\\
    & \times J\bigg(\frac{8a^2bd}{X}\bigg) \bigg\{W_1\bigg(\frac{m}{8a^2b|d|\sqrt{q_1}}\bigg) - W_1\bigg(\frac{m}{Y}\bigg)\bigg\}\cdot \bigg\{W_2\bigg(\frac{n}{8a^2b|d|\sqrt{q_2}}\bigg) - W_2\bigg(\frac{n}{Y}\bigg)\bigg\} \\
    & = S^4_{Q'}(a \leq Z) + S^4_{Q'}(a > Z),
\end{align*}
where $S^4_{Q'}(a \leq Z)$ and $S^4_{Q'}(a > Z)$
are the contribution in $S^4_{Q'}(N_1 \leq Y, N_2)$ corresponding to terms with $a \leq Z$ and $a > Z$. As before, invoking Proposition 6.1 of \cite{Li-MR4768632} and Proposition 3.1 of \cite{Kumar.etc-MR4765788} as well as Cauchy-Schwarz, we have the following estimate for $S^4_{Q'}(a > Z)$:
\begin{equation}\label{S4Q'a>Z}
    S^4_{Q'}(a > Z) \ll \frac{X(\log X)^{1+\epsilon}}{Z^{1-\epsilon}}.
\end{equation}

It remains to treat $S^4_{Q'}(a \leq Z)$.
We split the summation over $m$ and $n$ into dyadic intervals, and re-write $S^4_{Q'}$ as
\begin{align*}
    S^4_{Q'}(a \leq Z)& = \dsum_{N_1}\dsum_{N_2}\sum_{\substack{(a,2Q)=1 \\ a \leq Z}}\mu(a)\sum_{b|Q}\mu(b)\sum_{m}\sum_{n}\frac{\lambda_f(m)\lambda_g(n)}{\sqrt{mn}}\sum_{(d,2)=1}\chi_{8bd}(mnQ')\\
    & \times J\bigg(\frac{8a^2bd}{X}\bigg) \bigg\{W_1\bigg(\frac{m}{8a^2b|d|\sqrt{q_1}}\bigg) - W_1\bigg(\frac{m}{Y}\bigg)\bigg\}\cdot \bigg\{W_2\bigg(\frac{n}{8a^2b|d|\sqrt{q_2}}\bigg) - W_2\bigg(\frac{n}{Y}\bigg)\bigg\}.
\end{align*}
We shall treat different dyadic summation ranges differently in the following. 

\subsection{$N_1 \leq Y$ or $N_2 \leq Y$}
\bigskip
 It suffices to study 
\begin{align*}
    S^4_{Q'}(a\leq Z;N_1 \leq Y, N_2) & = \dsum_{N_1\leq Y}\dsum_{N_2}\sum_{\substack{(a,2Q)=1 \\ a \leq Z}}\mu(a)\sum_{b|Q}\mu(b)\sum_{m}\sum_{n}\frac{\lambda_f(m)\lambda_g(n)}{\sqrt{mn}}\sum_{(d,2)=1}\chi_{8bd}(mnQ')\\
    & \times J\bigg(\frac{8a^2bd}{X}\bigg) \bigg\{W_1\bigg(\frac{m}{8a^2b|d|\sqrt{q_1}}\bigg) - W_1\bigg(\frac{m}{Y}\bigg)\bigg\}\cdot \bigg\{W_2\bigg(\frac{n}{8a^2b|d|\sqrt{q_2}}\bigg) - W_2\bigg(\frac{n}{Y}\bigg)\bigg\},
\end{align*}
\begin{align*}
    S^4_{Q'}(a\leq Z;N_2 \leq Y, N_1) & = \dsum_{N_1}\dsum_{N_2\leq Y}\sum_{\substack{(a,2Q)=1 \\ a \leq Z}}\mu(a)\sum_{b|Q}\mu(b)\sum_{m}\sum_{n}\frac{\lambda_f(m)\lambda_g(n)}{\sqrt{mn}}\sum_{(d,2)=1}\chi_{8bd}(mnQ')\\
    & \times J\bigg(\frac{8a^2bd}{X}\bigg) \bigg\{W_1\bigg(\frac{m}{8a^2b|d|\sqrt{q_1}}\bigg) - W_1\bigg(\frac{m}{Y}\bigg)\bigg\}\cdot \bigg\{W_2\bigg(\frac{n}{8a^2b|d|\sqrt{q_2}}\bigg) - W_2\bigg(\frac{n}{Y}\bigg)\bigg\},
\end{align*}
and 
\begin{align*}
    S^4_{Q'}(a\leq Z;N_1 \leq Y, N_2\leq Y) & = \dsum_{N_1\leq Y}\dsum_{N_2\leq Y}\sum_{\substack{(a,2Q)=1 \\ a \leq Z}}\mu(a)\sum_{b|Q}\mu(b)\sum_{m}\sum_{n}\frac{\lambda_f(m)\lambda_g(n)}{\sqrt{mn}}\sum_{(d,2)=1}\chi_{8bd}(mnQ')\\
    & \times J\bigg(\frac{8a^2bd}{X}\bigg) \bigg\{W_1\bigg(\frac{m}{8a^2b|d|\sqrt{q_1}}\bigg) - W_1\bigg(\frac{m}{Y}\bigg)\bigg\}\cdot \bigg\{W_2\bigg(\frac{n}{8a^2b|d|\sqrt{q_2}}\bigg) - W_2\bigg(\frac{n}{Y}\bigg)\bigg\}.
\end{align*}
We shall mainly focus on bounding $ S^4_{Q'}(N_1 \leq Y, N_2)$, as the estimation for $ S^4_{Q'}(N_1 \leq Y, N_2 \leq Y)$ shall follow along the way.

For the simplicity of notation, we shall write $S^4_{Q'}(a\leq Z';N_1\leq Y, N_2)$ as $T^4_{Q'}(a\leq Z)$. We firstly invoke Mellin inversion on $G(m/N_1)$ and $G(n/N_2)$ as well as the definitions for $W_1$ and $W_2$ to re-write $T^4_{Q'}(a \leq Z)$ as
\begin{equation*}
    \begin{aligned}
        T^4_{Q'}(a\leq Z) & = \dsum_{N_2}\dsum_{N_1 \leq Y}(2\pi i)^{-4}\sum_{\substack{(a,2Q)=1 \\ a \leq Z}}\mu(a)\sum_{b|Q}\mu(b)\int_{(3)}\int_{(3)}\gamma_1(w)\gamma_2(u)\\
        & \times (2\pi)^{-(w+u)}w^{-1}u^{-2}\int_{(0)}\int_{(0)}N_1^{v_1-w}N_2^{v_2-u}\tilde{G}(v_1-w)\tilde{G}(v_2-u)\\
        &\times \mathop{\sum\sum}_{(mn,2a)=1}\frac{\lambda_f(m)\lambda_g(n)}{m^{1/2+v_1}n^{1/2+v_2}}\chi_b(mnQ')\sum_{(d,2)=1}\chi_{8d}(mnQ')H_{w,u}\bigg(\frac{8a^2bd}{X}\bigg)dv_2dv_1dudw,
    \end{aligned}
\end{equation*} 
in which 
\[
    H_{w,u}(x) = J(x)\big((|x|\sqrt{q_1}X)^w-Y^w\big)\big((|x|\sqrt{q_2}X)^u-Y^u\big).
\]
Then, we apply Poisson summation formula and arrive at
\begin{equation*}
    \begin{aligned}
        T^4_{Q'}(a\leq Z) & = \frac{X}{16}\dsum_{N_2}\dsum_{N_1 \leq Y}(2\pi i)^{-4}\sum_{\substack{(a,2Q)=1 \\ a \leq Z}}\frac{\mu(a)}{a^2}\sum_{b|Q}\frac{\mu(b)}{b}\int_{(3)}\int_{(3)}\gamma_1(w)\gamma_2(u)\\
        & \times (2\pi)^{-(w+u)}w^{-1}u^{-2}\int_{(0)}\int_{(0)}N_1^{v_1-w}N_2^{v_2-u}\tilde{G}(v_1-w)\tilde{G}(v_2-u)\\
        &\times \mathop{\sum\sum}_{(mn,2a)=1}\frac{\lambda_f(m)\lambda_g(n)}{m^{1/2+v_1}n^{1/2+v_2}}\chi_b(mnQ')\sum_{k\in \mathbb{Z}}(-1)^k\frac{G_k(mnQ')}{mnQ'}\check{H}_{w,u}\bigg(\frac{kX}{16a^2bmnQ'}\bigg)dv_2dv_1dudw.
    \end{aligned}
\end{equation*}
As before, we denote the contribution from the term with $k=0$ and those with $k \neq 0$ as $T^{4,d}_{Q'}(a \leq Z)$ and $T^{4,n}_{Q'}(a\leq Z)$.
The diagonal term $T^{4,d}_{Q'}(a \leq Z)$ could be re-written as 
\begin{equation*}
    \begin{aligned}
        T^{4,d}_{Q'}(a \leq Z) & = \frac{X}{2\pi^2} (2\pi i)^{-4}
        \dsum_{N_2}\dsum_{N_1\leq Y}\int_{(3)}\int_{(3)}\gamma_1(w)\gamma_2(u)\\
        & \times (2\pi)^{-(w+u)}w^{-1}u^{-2}\check{H}_{w,u}(0)\int_{(0)}\int_{(0)}N_1^{v_1-w}N_2^{v_2-u}\tilde{G}(v_1-w)\tilde{G}(v_2-u)\\
        &\times \mathop{\sum\sum}_{\substack{(mn,2)=1\\mnQ' = \square}}\frac{\lambda_f(m)\lambda_g(n)}{m^{1/2+v_1}n^{1/2+v_2}}\prod_{p|mnQ}(1-p^{-1})\big(\prod_{p|mnQ}(1-p^{-2})^{-1}+O(Z^{-1})\big)dv_2dv_1dudw \\
        & = \frac{X}{2\pi^2} (2\pi i)^{-4}\dsum_{N_2}\dsum_{N_1\leq Y}\int_{\mathbb{R}}\int_{(3)}\int_{(3)}\gamma_1(w)\gamma_2(u)\\
        & \times (2\pi)^{-(w+u)}w^{-1}u^{-2}H_{w,u}(x)\int_{(0)}\int_{(0)}N_1^{v_1-w}N_2^{v_2-u}\tilde{G}(v_1-w)\tilde{G}(v_2-u)\\
        & \times \bigg\{
        L(1+v_1+v_2,f\otimes g)L(1+2v_1, \text{Sym}^2f)L(1+2v_2, \text{Sym}^2g)\mathcal{Z}(v_1,v_2;Q')\\
        & + O(Z^{-1})\cdot \mathop{\sum\sum}_{\substack{(mn,2)=1\\mnQ' = \square}}\frac{\lambda_f(m)\lambda_g(n)}{m^{1/2+v_1}n^{1/2+v_2}}\prod_{p|mnQ}(1-p^{-1})
        \bigg\}dv_2dv_1dudwdx \\
        & = \mathcal{T}_0 + \mathcal{T}_1 
    \end{aligned}
\end{equation*}
in which $\mathcal{T}_0$ and $\mathcal{T}_1$ are the contributions from the terms corresponding to the first and second term in the last second and third lines in the above.
Noticing that for a fixed $x \neq 0$, $H_{w,u}(x)$ is holomorphic and $H_{0,u}(x) = 0$ for any $u$ and $H_{w,0}(x) = 0$ for any $w$, we  move the integral lines $\Re(v_1), \Re(v_2) = 0$ for $v_1,v_2$-integrals to $\Re(v_1), \Re(v_2) = -\frac{1}{5}$, and then shift the integral lines $\Re(w), \Re(w) = 3$ for the $w,u$-integrals to $\Re(w), \Re(u) = -\frac{1}{10}$, passing through one simple pole at $u = 0$. By the Residue Theorem now we conclude that 
\begin{equation*}
    \mathcal{T}_0 \ll_{f,g} X^{\frac{9}{10}+\epsilon}.
\end{equation*}
On the other hand, we notice that after a change of variable for $v_1$ and $v_2$:
\begin{equation*}
    \begin{aligned}
        \mathcal{T}_1 & = X\cdot \int_{\mathbb{R}}J(x)\dsum_{N_2}\dsum_{N_1 \leq Y}\mathop{\sum\sum}_{\substack{(mn,2)=1\\mnQ' = \square}}G(m/N_1)G(n/N_2)\frac{\lambda_f(m)\lambda_g(n)}{\sqrt{mn}}\prod_{p|mnQ'}(1-p^{-1}) \\
        & \times \bigg\{W_1\bigg(\frac{m}{|x|X}\bigg) - W_1\bigg(\frac{m}{Y}\bigg)\bigg\}\cdot \bigg\{W_2\bigg(\frac{m}{|x|X}\bigg) - W_2\bigg(\frac{m}{Y}\bigg)\bigg\}\cdot O(Z^{-1})dx.
    \end{aligned}
\end{equation*}
Thus by taking absolute value on the integrand, we trivially estimate $\mathcal{T}_1$ to be $\ll_{f,g}\frac{X(\log X)^{63}}{Z}$, and hence conclude 
\begin{equation}\label{T4dQ'aleqZ}
    T^{4,d}_{Q'}(a \leq Z) \ll_{f,g} X^{\frac{9}{10}+\epsilon} + \frac{X(\log
    X)^{63}}{Z}.
\end{equation}

We then treat the off-diagonal term $T^{4,n}_{Q'}(a \leq Z)$, which is given by
\begin{equation}\label{T4offdiagonal}
    \begin{aligned}
       T^{4,n}_{Q'}(a \leq Z)& = \frac{X}{16}\dsum_{N_2}\dsum_{N_1 \leq Y}(2\pi i)^{-4}\sum_{\substack{(a,2Q)=1 \\ a \leq Z}}\frac{\mu(a)}{a^2}\sum_{b|Q}\frac{\mu(b)}{b}\int_{(3)}\int_{(3)}\gamma_1(w)\gamma_2(u)\\
        & \times (2\pi)^{-(w+u)}w^{-1}u^{-2}\int_{(0)}\int_{(0)}N_1^{v_1-w}N_2^{v_2-u}\tilde{G}(v_1-w)\tilde{G}(v_2-u)\\
        &\times \mathop{\sum\sum}_{(mn,2a)=1}\frac{\lambda_f(m)\lambda_g(n)}{m^{1/2+v_1}n^{1/2+v_2}}\chi_b(mnQ')\sum_{k\neq 0}(-1)^k\frac{G_k(mnQ')}{mnQ'}\check{H}_{w,u}\bigg(\frac{kX}{16a^2bmnQ'}\bigg)dv_2dv_1dudw \\
        & = \frac{X}{16}\sum_{\substack{(a,2Q)=1 \\ a \leq Z}}\frac{\mu(a)}{a^2}\sum_{b|Q}\frac{\mu(b)}{b}\big\{
        \sum_{l\geq 1}T^{4,n}_{Q'}(l) - T^{4,n}_{Q'}(0)\big\},
    \end{aligned}
\end{equation}
where for $l \geq 0$
\begin{equation*}
    \begin{aligned}
        T^{4,n}_{Q'}(l) &:= 
        \dsum_{N_2}\dsum_{N_1\leq Y}(2\pi i)^{-5}\int_{(\frac{1}{2})}\int^{\infty}_0J(x)x^{-s}\int_{(3)}\int_{(3)}\int_{(0)}\int_{(0)}\gamma_1(w)\gamma_2(u) \\
        & \times \frac{(xX)^w - Y^w}{w}\frac{(xX)^u - Y^u}{u^2}\tilde{G}(v_1-w)\tilde{G}(v_2-u) \\
        & \times N_1^{v_1-w}N_2^{v_2-u}\psum_{(k_1,2)=1}\sum_{2\nmid k_2 \geq 1}\mathop{\sum\sum}_{(mn,2a)=1}\frac{\lambda_f(m)\lambda_g(n)}{m^{1/2+v_1}n^{1/2+v_2}}\chi_b(mnQ')\frac{G_{2^{\delta(l)}k_1k_2^2}(mnQ')}{mnQ'}\\
        & \times \Gamma(s)\big(\cos(\pi s/2) + \text{sgn}(k_1)\sin(\pi s/2)\big)\bigg(\frac{2\pi \cdot 2^l|k_1|k_2^2X}{16a^2bmnQ'}\bigg)^{-s}dv_2dv_1dudwdxds,
        \end{aligned}
\end{equation*}
where $\delta(l) = 1$ if $(2,l)=1$ and is 0 otherwise.
Due to the structural similarity of $T^{4,n}_{Q'}(l)$'s, it suffices to estimate $T^{4,n}_{Q'}(0)$. 

We notice that upon an application of Mellin inversion on $\tilde{G}$'s, then inserting test functions $V(m/N_1)$ and $V_2(n/N_2)$, we may write $T^{4,n}_{Q'}(0)$ as 
\begin{equation*}
    \begin{aligned}
        T^{4,n}_{Q'}(0) &:= 
        \dsum_{N_2}\dsum_{N_1\leq Y}(2\pi i)^{-5}\int_{(\frac{1}{2})}\int^{\infty}_0J(x)x^{-s}\int_{(3)}\int_{(3)}\int_{(0)}\int_{(0)}\gamma_1(w)\gamma_2(u) \\
        & \times \frac{(xX)^w - Y^w}{w}\frac{(xX)^u - Y^u}{u^2}\tilde{G}(v_1-w)\tilde{G}(v_2-u) \\
        & \times N_1^{v_1-w}N_2^{v_2-u}\psum_{(k_1,2)=1}\sum_{2\nmid k_2 \geq 1}\mathop{\sum\sum}_{(mn,2a)=1}\frac{\lambda_f(m)\lambda_g(n)}{m^{1/2+v_1}n^{1/2+v_2}}\chi_b(mnQ')\frac{G_{k_1k_2^2}(mnQ')}{mnQ'}\\
        & \times \Gamma(s)\big(\cos(\pi s/2) + \text{sgn}(k_1)\sin(\pi s/2)\big)\bigg(\frac{2\pi \cdot |k_1|k_2^2X}{16a^2bmnQ'}\bigg)^{-s}dv_2dv_1dudwdxds,
        \end{aligned}
\end{equation*}
Now, we split the dyadic summation in $N_2$ into three ranges: $N_2 \leq Y$, $Y < N_2 \leq X$ and $N_2>X$, and correspondingly write $T_{Q'}^{4,n}(0)$ as
\[
    T_{Q'}^{4,n}(0) = T_{Q'}^{4,n}(N_1 \leq Y, N_2\leq Y) + T_{Q'}^{4,n}(N_1 \leq Y, Y < N_2\leq X)+ T_{Q'}^{4,n}(N_1 \leq Y, N_2>X).
\]

For $ T_{Q'}^{4,n}(N_1 \leq Y, N_2\leq Y)$, we shift the integral line $\Re(w) = \Re(u) = -6$, passing through a simple pole at $u = 0$. Using the Residue Theorem, we reach
\begin{equation*}
\begin{aligned}
    & T_{Q'}^{4,n}(N_1 \leq Y, N_2\leq Y) \ll \dsum_{N_1\leq Y}\dsum_{N_2\leq Y}\log(X/Y)\int_{-\infty}^{\infty}\int^{\infty}_{-\infty}\int^{\infty}_{-\infty}\frac{1}{(1+|-6+ it_3|)^{20}}\cdot \frac{1}{(1+|t_2|)^{20}} \\
    & \times \frac{1}{(1+|6+i(t_1-t_3)|)^{20}} \cdot \bigg\{\bigg|\psum_{2\nmid k_1}T(k_1, 1/2+it_1, 1/2+it_2,Q')\bigg| + \bigg|\psum_{2\nmid k_1}T(k_1, 1/2+it_1, 1/2+it_2,Q',1/2+it_3)\bigg| 
    \bigg\}\\
    & \ll \frac{a^2b}{X}\cdot \log(X/Y)\dsum_{N_1 \leq Y}\dsum_{N_2\leq Y}\sqrt{N_1N_2} \ll \frac{a^2bY\log(X/Y)}{X},
\end{aligned}
\end{equation*}
in which the estimation on $T(k_1, 1/2+it_1, 1/2+it_2,Q')$ and $T(k_1, 1/2+it_1, 1/2+it_2,Q',1/2+it_3)$ follow from (5.9) of \cite{zhou2025momentderivativesquadratictwists}.

For $T_{Q'}^{4,n}(N_1 \leq Y, Y < N_2\leq X)$, we shift the integral lines for $w$ and $u$ to  $\Re(w)=-6$ and $\Re(u) = (\log X)^{-1}$, and similarly as above we derive 
\begin{equation*}
    \begin{aligned}
        & T_{Q'}^{4,n}(N_1 \leq Y, N_2\leq Y) \ll \dsum_{N_1\leq Y}\dsum_{Y < N_2\leq X}\log(X/Y)^2\int_{-\infty}^{\infty}\int_{-\infty}^{\infty}\int^{\infty}_{-\infty}\int^{\infty}_{-\infty}\frac{1}{(1+|-6+ it_3|)^{20}}\frac{1}{(1+|(\log X)^{-1}+ it_4|)^{20}}\\
        & \times \frac{1}{(1+|6+i(t_1-t_3)|)^{20}} \frac{1}{(1+|-(\log X)^{-1}+i(t_2-t_4)|)^{20}}\bigg\{\bigg|\psum_{2\nmid k_1}T(k_1, 1/2+it_1, 1/2+it_2,Q')\bigg|\\
        & + \bigg|\psum_{2\nmid k_1}T(k_1, 1/2+it_1, 1/2+it_2,Q',1/2+it_3)\bigg| +  \bigg|\psum_{2\nmid k_1}T(k_1, 1/2+it_1, 1/2+it_2,Q',1/2+it_4)\bigg|\\
        &+ \bigg|\psum_{2\nmid k_1}T(k_1, 1/2+it_1, 1/2+it_2,Q',1/2+i(t_3+t_4))\bigg|
        \bigg\}\\
        & \ll \frac{a^2b}{X}\cdot \log(X/Y)^2\dsum_{N_1 \leq Y}\dsum_{Y < N_2\leq X}(Y/N_2)^{\frac{1}{\log X}}\sqrt{N_1N_2} \ll \frac{a^2b\sqrt{XY}\log(X/Y)^2}{X}.
    \end{aligned}
\end{equation*}

For $T^{4,n}_{Q'}(N_1 \leq Y, N_2 > X)$, we shift the integral lines for $w$ and $u$ to $\Re(w) = -6$ and $\Re(u) = 6$, and derive 
\begin{equation*}
    \begin{aligned}
        & T_{Q'}^{4,n}(N_1 \leq Y, N_2\leq Y) \ll \dsum_{N_1\leq Y}\dsum_{X < N_2}\int_{-\infty}^{\infty}\int_{-\infty}^{\infty}\int^{\infty}_{-\infty}\int^{\infty}_{-\infty}\frac{1}{(1+|-6+ it_3|)^{20}}\frac{1}{(1+|6+ it_4|)^{20}}\\
        & \times \frac{1}{(1+|6+i(t_1-t_3)|)^{20}} \frac{1}{(1+|-6+i(t_2-t_4)|)^{20}}\bigg\{\bigg|\psum_{2\nmid k_1}T(k_1, 1/2+it_1, 1/2+it_2,Q')\bigg|\\
        & + \bigg|\psum_{2\nmid k_1}T(k_1, 1/2+it_1, 1/2+it_2,Q',1/2+it_3)\bigg| +  \bigg|\psum_{2\nmid k_1}T(k_1, 1/2+it_1, 1/2+it_2,Q',1/2+it_4)\bigg|\\
        &+ \bigg|\psum_{2\nmid k_1}T(k_1, 1/2+it_1, 1/2+it_2,Q',1/2+i(t_3+t_4))\bigg|
        \bigg\}\\
        & \ll \frac{a^2b}{X}\dsum_{N_1 \leq Y}\dsum_{X < N_2}\sqrt{N_1N_2}\bigg(\frac{X}{N_2}\bigg)^6 \ll \frac{a^2b\sqrt{XY}}{X}.
    \end{aligned}
\end{equation*}
With these we may conclude that 
\[
    T^{4,n}_{Q'}(0) \ll \frac{a^2b}{(\log X)^{99-\epsilon}}
\]
as we have taken $Y = X/(\log X)^{200}$, and similarly (with the aid of Lemma 5.3 of \cite{zhou2025momentderivativesquadratictwists})
\begin{equation*}
    T^{4,n}_{Q'}(l) \ll \frac{a^2b}{(\log X)^{99-\epsilon}\cdot 2^l}.
\end{equation*}
Combining the above and \eqref{T4offdiagonal}, we conclude that 
\begin{equation}\label{T4nQ'aleqZ}
    T^{4,n}_{Q'}(a\leq Z) \ll \frac{XZ}{(\log X)^{99-\epsilon}}.
\end{equation}

Combining \eqref{T4dQ'aleqZ}, \eqref{T4nQ'aleqZ} and take $Z = (\log X)^{80}$ we conclude that 
\begin{equation*}
    S^4_{Q'}(a\leq Z; N_1\leq Y, N_2) = T^4_{Q'}(a\leq Z) \ll \frac{X}{(\log X)^{10}} \ll X.
\end{equation*}

The above also concludes that 
\[
    S^4_{Q'}(a\leq Z; N_1\leq Y, N_2\leq Y) \ll \frac{X}{(\log X)^{10}} \ll X.
\]
As for $S^4_{Q'}(a \leq Z; N_2\leq Y, N_1)$, we could proceed to estimate almost identically as in the case for $S^4_{Q'}(a\leq Z; N_1\leq Y,N_2)$, except that when shifting the integral lines for $w$ and $u$ in the off-diagonal, we swap the roles of $w$ and $u$, which shall cause minor differences in the expression of the integral (as now we shall always pass through a simple pole at $u=0$) which shall not affect the estimates, and we still have
\[
    S^4_{Q'}(a\leq Z; N_1\leq Y, N_2) \ll X.
\]

\subsection{$Y < N_1,N_2 \leq X$}
In this subsection, following section 7.4 of \cite{zhou2025momentderivativesquadratictwists}, we estimate the contribution from $S^{4}_{Q'}(a\leq Z; Y < N_1, N_2 \leq X)$. Applying Cauchy-Schwarz to the $d$-summation, as well as noticing the positivity, we have 
\begin{equation*}
    \begin{aligned}
     S^{4}_{Q'}(a\leq Z; Y < N_1, N_2 \leq X) & \leq \dsum_{Y< N_1\leq X}\bigg(\psum_{(d,2Q)=1}J(8d/X)\bigg|\sum_{m\in \mathbb{Z}}\frac{\lambda_f(m)\chi_{8d}(m)}{\sqrt{m}}G(m/N_1)\\
     & \times \big(W_1\bigg(\frac{m}{8d\sqrt{q_1}}\bigg) - W_1\bigg(\frac{m}{Y}\bigg) \big)\bigg|^2\bigg)^{\frac{1}{2}} \\
     & \times \dsum_{Y< N_2\leq X}\bigg(\psum_{(d,2Q)=1}J(8d/X)\bigg|\sum_{n\in \mathbb{Z}}\frac{\lambda_g(n)\chi_{8d}(n)}{\sqrt{n}}G(n/N_2)\\
     & \times \big(W_2\bigg(\frac{n}{8d\sqrt{q_2}}\bigg) - W_2\bigg(\frac{n}{Y}\bigg) \big)\bigg|^2\bigg)^{\frac{1}{2}}.
    \end{aligned} 
\end{equation*}

We estimate the first factor of the above as follows:
\begin{lemma}\label{f-factor in Y<N1leqX range}
    With the notations above, we have
    \[
        \dsum_{Y< N_1\leq X}\bigg(\psum_{(d,2Q)=1}J(8d/X)\bigg|\sum_{m\in \mathbb{Z}}\frac{\lambda_f(m)\chi_{8d}(m)}{\sqrt{m}}G(m/N_1)
        \big(W_1\bigg(\frac{m}{8d\sqrt{q_1}}\bigg) - W_1\bigg(\frac{m}{Y}\bigg) \big)\bigg|^2\bigg)^{\frac{1}{2}} \ll X^{\frac{1}{2}}(\log \log X)^2.
    \]
\end{lemma}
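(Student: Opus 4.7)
The plan is to use Mellin inversion to expose the cancellation between $W_1(m/(8d\sqrt{q_1}))$ and $W_1(m/Y)$, and then reduce matters to the bilinear form bound of Lemma~\ref{Li-biliear}. From the contour representation of $W_1$,
\[
W_1\!\bigg(\frac{m}{8d\sqrt{q_1}}\bigg) - W_1\!\bigg(\frac{m}{Y}\bigg) = \frac{1}{2\pi i}\int_{(3)}\gamma_1(w)(2\pi m)^{-w}\frac{(8d\sqrt{q_1})^w - Y^w}{w}\,dw,
\]
and the factor $((8d\sqrt{q_1})^w - Y^w)/w$ is entire in $w$. The plan is to shift the contour to $\Re(w) = 1/\log X$. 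Writing this factor as $\int_Y^{8d\sqrt{q_1}} t^{w-1}\,dt$ gives
\[
\bigg|\frac{(8d\sqrt{q_1})^w - Y^w}{w}\bigg| \ll \log(X\sqrt{q_1}/Y) \ll \log\log X
\]
uniformly in $d$ (using $8d \asymp X$ and $X/Y = (\log X)^{200}$), while $\gamma_1(w) \ll (1+|w|)^{-20}$ supplies vertical decay.

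Next I would apply Cauchy--Schwarz to the resulting $w$-integral with weight $|\gamma_1(w)((8d\sqrt{q_1})^w - Y^w)/w|$, whose contour $L^1$ mass is $\ll \log\log X$. Squaring and summing over $d$ with weight $J(8d/X)$, then interchanging integration, yields
\[
\psum_{(d,2Q)=1}J(8d/X)\bigg|\cdots\bigg|^2 \ll (\log\log X)\int_{-\infty}^{\infty}\frac{\log\log X}{(1+|t|)^{20}}\,\psum_{(d,2Q)=1}J(8d/X)\bigg|\sum_m \frac{\lambda_f(m)\chi_{8d}(m)}{m^{1/2 + 1/\log X + it}}G(m/N_1)\bigg|^2 dt.
\]
By Lemma~\ref{Li-biliear} with $\mathcal{X} \asymp X$, the inner $d$-sum is $\ll X(1+|t|)^3 \log(2+|t|)$, and the $(1+|t|)^{-20}$ decay absorbs the polynomial growth, producing $X(\log\log X)^2$ for each dyadic $N_1$. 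Taking the square root gives $X^{1/2}\log\log X$, and since there are only $O(\log(X/Y)) = O(\log\log X)$ dyadic values of $N_1$ in $(Y,X]$, the outer dyadic sum picks up one more factor of $\log\log X$, matching the claimed bound $X^{1/2}(\log\log X)^2$.

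The main technical point is to shift the contour close enough to the imaginary axis (we use $\Re(w) = 1/\log X$) to fully exploit the cancellation at $w=0$; a contour kept at positive distance would only save $\log X$ in place of $\log\log X$, which would be insufficient. The rest is a routine Cauchy--Schwarz reduction to the bilinear estimate, together with the elementary fact that the number of dyadic scales in $(Y,X]$ is $O(\log\log X)$.
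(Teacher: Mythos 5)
Your approach is essentially the one the paper takes: apply Mellin inversion to $W_1$, shift the $w$-contour to $\Re(w)=1/\log X$ to exploit the zero of $(8d\sqrt{q_1})^w-Y^w$ at $w=0$ and extract $\log\log X$ rather than $\log X$, reduce via Cauchy--Schwarz to the bilinear form of Lemma~\ref{Li-biliear}, and then count the $O(\log\log X)$ dyadic scales in $(Y,X]$. The Cauchy--Schwarz split you chose (putting the entire factor $|\gamma_1(w)((8d\sqrt{q_1})^w-Y^w)/w|$ into the weight and using its uniform $L^1$ and $L^\infty$ control) is slightly cleaner than the paper's and gives the claimed power of $\log\log X$ directly.

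There is, however, one technical gap. After shifting to $\Re(w)=1/\log X$ your inner sum takes the form $\sum_m \lambda_f(m)\chi_{8d}(m)\,m^{-1/2-1/\log X-it}G(m/N_1)$, whose exponent has real part $1/2+1/\log X$, not $1/2$. Lemma~\ref{Li-biliear} as stated applies to $\sum_n \lambda_f(n) n^{-1/2-it}\left(\frac{8d}{n}\right)G(n/N)$ with $t$ real and a fixed $G$, so it does not apply verbatim. The paper addresses exactly this by inserting the auxiliary function $V$ (with $G\cdot V=G$) and performing a second Mellin inversion on $G(m/N_1)$, then changing variables so the $m$-sum decouples from $w$ and lives on $\Re(s)=0$; only then is Lemma~\ref{Li-biliear} invoked. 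To patch your argument without this extra step you must at least remark that $m^{-1/\log X}G(m/N_1)=N_1^{-1/\log X}\,G_X(m/N_1)$ with $G_X(x):=G(x)x^{-1/\log X}$, a family of test functions supported on $[3/4,2]$ with derivatives bounded uniformly in $X$, and that the implied constant in Lemma~\ref{Li-biliear} is uniform over such families. This is harmless but currently unstated, and it is precisely what the paper's $V$-insertion and second Mellin step are designed to avoid.
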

\begin{proof}
    Denoting the $d$-summation under the square root in the above as $\mathcal{R}$, we firstly insert the test function $V$ into the above as 
    \begin{equation*}
        \begin{aligned}
            \mathcal{R} & = \psum_{(d,2Q)=1}J(8d/X)\bigg|\sum_{m}\frac{\lambda_f(m)\chi_{8d}(m)}{\sqrt{m}} G(m/N_1)V(m/N_1) \\
            & \big(W_1\bigg(\frac{m}{8d\sqrt{q_1}}\bigg) - W_1\bigg(\frac{m}{Y}\bigg) \big)\bigg|^2 \\
            & = \psum_{(d,2Q)=1}J(8d/X)\bigg|(2\pi i)^{-2}\int_{(3)}\int_{(0)}\tilde {G}(s)N_1^s(2\pi)^{-w}\gamma_1(w)w^{-1}\big\{(8d\sqrt{q_1})^w - Y^w
            \big\}\\
            & \times \sum_{m}\frac{\lambda_f(m)\chi_{8d}(m)}{m^{w+s+\frac{1}{2}}}V(m/N_1)dsdw
            \bigg|^2.
        \end{aligned}
    \end{equation*}
    In the above, we make a change of variable on $w$, and then shift the $w$-integral line to $\Re(w) = (\log X)^{-1}$, and then invoke Cauchy-Schwarz to the double integral to arrive at 
    \begin{equation*}
        \begin{aligned}
            \mathcal{R} & \ll \bigg(\frac{Y}{N_1}\bigg)^{2(\log X)^{-1}}\times   \psum_{(d,2Q)=1}J(8d/X)\int_{((\log X)^{-1})}\int_{(0)} |\gamma_1(w)||\tilde{G}(s-w)|\bigg|\frac{(8d\sqrt{q_1}/Y)^w - 1}{\sqrt{w}}\bigg|^2dsdw \\
            & \times \int_{((\log X)^{-1})}\int_{(0)} |\gamma_1(w)||\tilde{G}(s-w)||w|^{-1}\bigg|\sum_{m}\frac{\lambda_f(m)\chi_{8d}(m)}{m^{\frac{1}{2}+s}}V(m/N_1)\bigg|^2dsdw.
        \end{aligned}
    \end{equation*}
    We treat the first double integral above first. Invoking the rapid decay of $\tilde{G}(\cdot)$ and $\gamma_1(\cdot)$, uniformly in $d$ we have
    \begin{equation*}
        \begin{aligned}
            & \int_{((\log X)^{-1})}\int_{(0)} |\gamma_1(w)||\tilde{G}(s-w)|\bigg|\frac{(8d\sqrt{q_1}/Y)^w - 1}{\sqrt{w}}\bigg|^2dsdw \\
            & \ll \int^{\infty}_{-\infty}\int^{\infty}_{-\infty}\frac{|(\log X)^{-1}+it_1|}{(1+ |(\log X)^{-1} + it_1|)^{20}\cdot (1+ |-(\log X)^{-1} + i(t_2-t_1)|)^{20}} \\
            & \times \bigg|\frac{(8d\sqrt{q_1}/Y)^{(
            \log X
            )^{-1} + it_1}-1}{(\log X)^{-1}+it_1}\bigg|^2dt_2dt_1 \\
            & \ll \sup_{t \in \mathbb{R}}\bigg\{\bigg|\frac{(8d\sqrt{q_1}/Y)^{(
            \log X
            )^{-1} + it}-1}{(\log X)^{-1}+it}\bigg|^2\bigg\}\times \int^{\infty}_{-\infty}\frac{1}{(1+|t_1|)^{19}}dt_1 \ll (\log \log X)^2,
        \end{aligned}
    \end{equation*}
    in which we have used the fact that 
    \[
    \frac{(8d\sqrt{q_1}/Y)^{(
            \log X
            )^{-1} + it}-1}{(\log X)^{-1}+it}\ll \log \log X
    \]
    for all $t\in \mathbb{R}$ as $\frac{X}{16}\leq d \leq \frac{X}{4}$.

    As for the rest 
    \[
        \psum_{(d,2Q)=1}J(8d/X)\int_{((\log X)^{-1})}\int_{(0)} |\gamma_1(w)||\tilde{G}(s-w)||w|^{-1}\bigg|\sum_{m}\frac{\lambda_f(m)\chi_{8d}(m)}{m^{\frac{1}{2}+s}}V(m/N_1)\bigg|^2dsdw,
    \]
    we apply Lemma \ref{Li-biliear} to conclude that the above is bounded by $X$. Thus 
    \[
        \dsum_{Y<N_1 \leq X}\mathcal{R}^{\frac{1}{2}} \ll \dsum_{Y < N_1 \leq X}(Y/N_1)^{(\log X)^{-1}}X^{\frac{1}{2}}(\log \log X) \ll X^{\frac{1}{2}}(\log \log X)^2,
    \]
    finishing the proof.
\end{proof}

Now, combining Lemma \ref{f-factor in Y<N1leqX range} and Lemma 7.1 of \cite{zhou2025momentderivativesquadratictwists}, we have 
\begin{equation*}
    S^4_{Q'}(a\leq Z; Y < N_1, N_2 \leq X) \ll X(\log \log X)^5.
\end{equation*}

\subsection{Neither do $N_1 \leq Y$ nor $N_2\leq Y$}
In this section we treat $S^4_{Q'}(a\leq Z; Y< N_1\leq X, X<N_2)$, $S^4_{Q'}(a\leq Z; Y< N_2 \leq X, X < N_1)$ and $S^4_{Q'}(a\leq Z;X < N_1, X<N_2)$. We only estimate $S^4_{Q'}(a\leq Z; Y< N_1\leq X, X<N_2)$ in full detail as the other two could be treated similarly. 

For $S^4_{Q'}(a\leq Z; Y< N_1\leq X, X<N_2)$, we apply Cauchy-Schwarz and positivity to get 
\begin{equation*}
    \begin{aligned}
    S^{4}_{Q'}(a\leq Z; Y < N_1, N_2 \leq X) & \leq \dsum_{Y< N_1\leq X}\bigg(\psum_{(d,2Q)=1}J(8d/X)\bigg|\sum_{m\in \mathbb{Z}}\frac{\lambda_f(m)\chi_{8d}(m)}{\sqrt{m}}G(m/N_1)\\
     & \times \big(W_1\bigg(\frac{m}{8d\sqrt{q_1}}\bigg) - W_1\bigg(\frac{m}{Y}\bigg) \big)\bigg|^2\bigg)^{\frac{1}{2}} \\
     & \times \dsum_{X< N_2}\bigg(\psum_{(d,2Q)=1}J(8d/X)\bigg|\sum_{n\in \mathbb{Z}}\frac{\lambda_g(n)\chi_{8d}(n)}{\sqrt{n}}G(n/N_2)\\
     & \times \big(W_2\bigg(\frac{n}{8d\sqrt{q_2}}\bigg) - W_2\bigg(\frac{n}{Y}\bigg) \big)\bigg|^2\bigg)^{\frac{1}{2}}.
    \end{aligned}
\end{equation*}
For the summand
\begin{equation*}
    \psum_{(d,2Q)=1}J(8d/X)\bigg|\sum_{n\in \mathbb{Z}}\frac{\lambda_g(n)\chi_{8d}(n)}{\sqrt{n}}G(n/N_2) \times \big(W_2\bigg(\frac{n}{8d\sqrt{q_2}}\bigg) - W_2\bigg(\frac{n}{Y}\bigg) \big)\bigg|^2
\end{equation*}
of the $d$-summation in the second factor above,
we proceed as in the previous subsection to rewrite it as 
\begin{equation*}
    \begin{aligned}
        & \psum_{(d,2Q)=1}J(8d/X)\bigg|(2\pi i)^{-2}\int_{(3)}\int_{(0)}\tilde {G}(s)N_2^s(2\pi)^{-u
        }\gamma_2(u)u^{-2}\big\{(8d\sqrt{q_2})^u - Y^u
            \big\}\\
        & \times \sum_{n}\frac{\lambda_g(n)\chi_{8d}(n)}{n^{u+s+\frac{1}{2}}}V(n/N_2)dsdu
        \bigg|^2 \\
        & = \psum_{(d,2Q)=1}J(8d/X)\bigg|(2\pi i)^{-2}\int_{(3)}\int_{(0)}\tilde{G}(s-u)N_2^{s-u}(2\pi)^{-u}\gamma_2(u)u^{-2}\big\{(8d\sqrt{q_2})^u - Y^u\big\}\\
        & \times \sum_{n}\frac{\lambda_g(n)\chi_{8d}(n)}{n^{s+\frac{1}{2}}}V(n/N_2)dsdu\bigg|^2 \\
        & \ll \bigg(\frac{X}{N_2}\bigg)^6\bigg(\int_{-\infty}^{\infty}\int^{\infty}_{-\infty}
        \frac{1}{|3+it_1|\cdot (1+|3+it_1|)^{20}\cdot (1 + |-3+i(t_2-t_1)|)^{20}}
        dt_2dt_1\bigg)\\
        & \times \bigg(\int_{-\infty}^{\infty}\int^{\infty}_{-\infty}
        \frac{1}{|3+it_1|\cdot (1+|3+it_1|)^{20}\cdot (1 + |-3+i(t_2-t_1)|)^{20}}\psum_{(d,2Q)=1}J(8d/X)\\
        & \bigg|\sum_{n}\frac{\lambda_g(n)\chi_{8d}(n)}{n^{s+\frac{1}{2}}}V(n/N_2)\bigg|^2
        dt_2dt_1\bigg)
    \end{aligned}
\end{equation*}
By Lemma 3.5 of \cite{zhou2025momentderivativesquadratictwists}, the above is bounded by $(X/N)^6X$. Now, by Lemma \ref{f-factor in Y<N1leqX range}, we conclude 
\[
    S^4_{Q'}(a\leq Z; Y < N_1 \leq X, X < N_2) \ll X(\log \log X)^2.
\]
Similarly, we have 
\[
    S^4_{Q'}(a\leq Z; Y < N_2 \leq X, X < N_1) \ll X(\log \log X)^3
\]
by Lemma 7.1 of \cite{zhou2025momentderivativesquadratictwists},
and 
\[
    S^4_{Q'}(a\leq Z;  N_1,N_2>X) \ll X.
\]

Therefore we by the estimations in sub-sections 8.2, 8.3 and 8.4, may deduce that with $Z = (\log X)^{80}$:
\begin{equation}\label{S4nQ'aleqZ}
    S^{4}_{Q'}(a \leq Z) \ll X(\log \log X)^5.
\end{equation}
Now, from \eqref{S4Q'a>Z} and \eqref{S4nQ'aleqZ}, we conclude that 
\[
    S^4_{Q'} \ll X(\log \log X)^5,
\]
finishing the proof of Proposition \ref{BfB'g}. $\square$

\bigskip 

\bibliographystyle{plain}
\bibliography{Bib.bib}

\end{document}